\documentclass{cmslatex}
\usepackage[paperwidth=7in, paperheight=10in, margin=.875in]{geometry}
 \usepackage[backref,colorlinks,linkcolor=red,anchorcolor=green,citecolor=blue]{hyperref}

\usepackage{amsfonts,amsmath,amssymb}
\usepackage{graphicx}
\usepackage{epstopdf}
\usepackage{algorithmic}
\usepackage{paralist}
\usepackage{tikz}
\usepackage{ulem}
\newcounter{saveenum}
\newcommand{\vertiii}[1]{{\left\vert\kern-0.25ex\left\vert\kern-0.25ex\left\vert #1 
		\right\vert\kern-0.25ex\right\vert\kern-0.25ex\right\vert}}

\usepackage{macros}
\usepackage{cite}
\usepackage{enumerate}
\sloppy

\thinmuskip = 0.5\thinmuskip \medmuskip = 0.5\medmuskip
\thickmuskip = 0.5\thickmuskip \arraycolsep = 0.3\arraycolsep




\begin{document}

\title{Friction mediated by transient elastic linkages : \newline
		asymptotic expansions and fat tails\thanks{July 3, 2024}}

\author{%
  Samar Allouch\thanks{Laboratoire Analyse, G\'eom\'etrie \& Applications, Universite Paris Sorbonne Nord, France},
  \and Vuk Milisic\thanks{CNRS, Univ Brest,  Laboratoire de Mathematiques de Bretagne Atlantique, UMR 6205, France}}

 \pagestyle{myheadings} \markboth{Friction mediated by transient linkages : expansions and fat tails}{S. Allouch \& V.Milisic} \maketitle

\begin{abstract}
Several results in  previous works , strongly depend on the
	exponential tail of the linkages' distribution in our adhesive models.
	 The purpose of this paper is to weaken this hypothesis and  to allow  
	 more {\em fat tails} for large ages. From the biological point of view
	 this means that we allow adhesions to be stronger, because linkages break less often.
	 
	 Moreover, in our previous articles, the asymptotic expansion of adhesion site's position 
	 and the corresponding error estimates also used some fast decay properties of the
	 kernel, we show, when the kernel is a given function of age but constant in time, how to overcome 
	 this problem and construct asymptotic expansions in a systematic way at any 
	 order with respect to a small parameter $\e$ representing the linkages' turnover.
\end{abstract}


\begin{keywords}
\textbf{integral equation, renewal problem, asymptotic expansion, non-exponential distribution, comparison principle, Volterra equation,
	singular perturbation problem}
\end{keywords}

\maketitle
\noindent{\bf{MSC calssification :} 35B40, 45D05}

\section{Introduction}
This article is part of a project related to the mathematical study of adhesion forces in the context of cell motility (see \cite{allouch2021friction}, \cite{MiOel.1}, \cite{MiOel.2}, \cite{MiOel.3} and \cite{MiOel.4}). Cell adhesion and migration play a crucial role in many biological phenomena such as embryonic development, inflammatory responses, wound healing and tumor metastasis. 

{The main motivation comes from the seminal papers \cite{OeSchVi,OeSch}, where the authors
	built a complex and realistic model of the Lamellipodium. It is a cytoskeletal quasi-two-dimensional actin mesh,  the whole structure propels the cell across a substrate.
	In these works, the authors represent the actin network through a set of 
	 2D axi-symmetric equations whose solution models Lamellipodium's 
	 position $x(s,t)$ evolving in time $t\in \rr$ and with respect to a reference configuration 
	 $s \in (0,1)$. 
Using gradient flow techniques, the authors obtain a system of non-linear equations :
$$
\left\{ 
\begin{aligned}
	&	\underbrace{\kappa^B \partial^2_s ( \eta \partial_s^2 x)}_{\text{bending}}-
	\underbrace{\partial_s(\eta \lambda \partial_s x)}_{\text{unextensibility constraint}} + \eta \boxed{ \underbrace{\mu^A D_t x }_{\text{adhesion }}} + \\
	&	\quad+	
	\underbrace{
		\partial_s
		\left( 
		\eta^2  \mu^T \left(
		\arccos
		\left(\partial_s \left| x \right|\right)-\varphi_0
		\right) \partial_s x^{\bot}
		\right)
	}_{\text{twisting}} 
	+ 
	\eta^2
	\boxed{ 
		\underbrace{
			\left(\mu^S D_t \varphi\right) x^\bot 
		}_{\text{stretching}}}
	=0 \\
	& 	\left| \partial_s x\right|= 1,\\
\end{aligned}
\right.
$$
supplemented by initial and boundary conditions that we omit for sake of conciseness.
The adhesion and stretching friction terms appearing in this force balance equation
were obtained as formal limits of a microscopic description of adhesion mechanisms.
For instance the first friction (boxed) term 
is obtained as the limit when $\e$, a small dimensionless parameter goes to zero in the expression :
$$
\ue \int_\rr \left(x_\e (s,t) - x_\e (s+\e a,t-\e a)\right) \kernel(s,a,t) da \to \mu_1 (s,t) \left( \dt - \ds \right) x_0(s,t) =: \mu^A D_t x
$$
here the $a$ represents the age of a linkage established with the previous locations
of the filament and the shift in the reference configuration $s+ \e a$ comes from 
the (de)polymerization of the filaments inside the lamelipodium. 
The  parameter $\e$ represents at the same time a characteristic age of the linkages, 
and  the inverse of the stiffeness of the elastic linkage under consideration.
The elongation $(x(s,t)-x(s+\e a,t-\e a))/\e$, for a fixed $a$, is the linear elastic force 
exerted on $x(s,t)$ by a linkage established at $x(s+\e a,t-\e a)$ (see also 
\cite{oelz_schmei_book,MR3385931} for an extension to the non-axi-symmetric case).

Our initial goal was to 
give a rigorous justification of these formal computations but the non-linear
space-dependent feature of the problem made it out of reach at that time.
Instead we considered a simplified problem where only a single adhesion 
point $\xeps(t)$ was taken into account \cite{MiOel.1}.
}
It is obtained  solving a simplified force balance  equation 
\begin{equation}\label{eq.X.eps}
	\left\{
	\begin{aligned}
		&\ue \int_\rr \left(X_\e (t) - X_\e(t-\e a)\right)\rhoe(a,t) da = f(t), & t>0, \\
		&X_\e(t) = X_p(t),& t\leq 0,
	\end{aligned}
	\right.
\end{equation}
where $f$ is an external force and the left hand side {is} a 
continuum of elastic spring forces with respect to past positions {$\xeps(t-\e a)$}.
The density of linkages 
$ \rhoe$ is either a given function or it can be, {for instance as in the original model \cite{OeSch},}  a solution of an age-structured problem~:
\begin{equation}\label{eq.rho.complet.bis}
	\left\{
	\begin{aligned}
		&\left(\e \dt + \da + \zeta(a,t)\right) 	 \rhoe(a,t) = 0 , & (a,t) \in \rr\times(0,T), \\
		&\rhoe(0,t) = \beta(t) \left( 1 - \int_\rr \rhoe(a,t) da\right), & (a,t) \in\{0\}\times(0,T), \\
		&\rhoe(a,0) = \rhoi(a), & (a,t) \in \rr \times\{0\}.
	\end{aligned}\right.
\end{equation}
In the latter system, $\beta$ (resp.  $ \zeta \in \mathbb{R}_{+} $) is the kinetic on-rate (resp. off-rate) (\cite{MiOel.1}, \cite{MiOel.2}, and \cite{MiOel.3}). Under the {major} assumption that the off-rate $\zeta$ admits a strictly positive lower bound 
{$$
0< \ztmin  \leq \zeta(a,t), \quad \alev (a,t) \in \rr\times\rr
$$}
 in \cite{MiOel.1}, the authors  studied  the asymptotic limit of the systems \eqref{eq.X.eps} and \eqref{eq.rho.complet.bis} when $\e$ goes to zero. They obtained  {rigorous} convergence results{, namely they showed that}
$$ \| X_{\e}-X_{0}\|_{{C}^{0}([0,T])}+\| \rho_{\e}-\rho_{0}\|_{{C}^{0}((0,T]; L^{1}(\mathbb{R}_{+}))} \to 0,$$
where the limit 	$ X_{0}$ solves {a simple force balance between the external load and a
friction term, }
\begin{equation}\label{eqX0}
\left\{
	\begin{aligned}
		& \mu_{1,0}(t)X'_{0}=f(t),&  t>0,\\ 
		& X_{0}(t=0)=X_{p}(0),& t=0,
	\end{aligned}
\right.
\end{equation}
here $\mu_{1,0}(t)$ denotes the first moment of $\rhoz$, {\em i.e.} $\mu_{1,0}(t):= \int_{0}^{\infty} a \rho_{0}(a,t)da$ and  $\rho_{0}$ satisfies {the non-local problem}~:
\begin{equation}\label{eq.rho.zero}
	\left\{
	\begin{aligned}
		&{{\partial}_a}\rho_{0}+\zeta(a,t)\rho_{0}=0 ,\hspace{4cm} &t >0, a >0, \\ 
		&\rho_{0}(a=0,t)=\beta(t)\left( 1- \int_{0}^{\infty}\rho_{0}(t,\Tilde{a})~d\Tilde{a} \right),& t >0.
	\end{aligned}\right.
\end{equation}
We underline that, {if for instance $f$ is constant or admits some limit for $t$ large}, 
the $\e$ scaling {provides some convergence results for} long time asymptotics 
of \eqref{eq.X.eps} and \eqref{eq.rho.complet.bis} {(see for more details \cite[Theorem 4.4]{milisic-shmeiser})}.

{In the previous setting, the convergence of $\rhoe$ towards $\rhoz$ stemmed form \cite[Lemma 3.1 and Theorem 3.1, p. 495]{MiOel.1}.  Using the  very particular Lyapunov functional
	\begin{equation}\label{lyapunov-fct}
		\mathcal{H}[u](t):=\left| \int_{0}^{\infty}u(a,t)da\right| + \int_{0}^{\infty} | u(a,t)|da~,
	\end{equation}
the authors obtained, setting $\hrho(a,t) := \rhoe(a,t)-\rhoz(a,t)$ that
\begin{equation}\label{eq.nrj}
	\e \ddt{} \cH [\hrho] (t ) + \ztmin \cH[\hrho] (t ) \leq o_\e (1)
\end{equation}
leading to exponential error estimates :
$$
\| \rhoe(\cdot,t) - \rhoz(\cdot,t) \|_{L^1(\rr)} \leq \exp( - t \ztmin/\e) \| \rhoi(\cdot) - \rhoz(\cdot,0) \|_{L^1(\rr)} + o_\e(1)
$$
where $o_\e(1)$ means small when $\e$ is small. The exponential nature of the initial layer is closely
related to the strictly positive lower bound $\ztmin$. This convergence result implies that, 
for $\e$ small enough,  $\rhoe$ is a perturbation
of an exponentially decreasing profile in age. 
From the modelling point of view, the exponential {\em tail} of 
$\rhoe$  is of importance since it restricts the linkages to be of very short range 
 in \eqref{eq.X.eps}.
On the  mathematical side, if instead $\zeta$ is non-negative but does not admit 
a lower bound this convergence result is to our knowledge an open question. Indeed, 
the Lyapunov functional then provides only stability; no convergence result is at hand since
no lower-bound on the dissipation exists in \eqref{eq.nrj}.
This work intends to 
fill this gap. Namely assuming that there exists a non-increasing positive function $m \in L^1(\rr)$, such that 
$$
0\leq - \frac{m'(a)}{m(a)} \leq \zeta(t,a) \leq \ztmax ,\quad \text{ a.e.} \; (a,t) \in \rr\times\rr
$$
we show that it is again possible to establish convergence of $\rhoe$ towards $\rhoz$.
For instance let's examin some particular choices of $m$.
Choosing $m(a) := \exp(-\ztmin a)$, one recovers the exponential case
previously studied, whereas if $m(a) := (1+a)^{-\sigma}$ for $\sigma > 0$, allows 
$\zeta$ to tend to zero as $a$ grows.

This new convergence result is made possible 
thanks to a higher order asymptotic expansion of $\rhoe$ which includes
a first order {\em far field} term $\rhou$ 
solution of :
$$
	\left\{
	\begin{aligned}
		& \left(\da + \zeta(a,t)\right) \rhou(a,t) = - \dt \rhoz(a,t), & a>0,\; t>0,\\
		& \rhou(0,t) = -\beta(t) \int_\rr \rhou(a,t) da,& a=0,\; t>0, 
	\end{aligned}
	\right.
$$
and the initial layer (of order zero)  $\rz$, solving :
$$
	\left\{
	\begin{aligned}
		& \left(\dt + \da + \zeta(a,0) \right) \mrho_0(a,t) = 0, & a >0, \; t>0, \\
		& \mrho_0(0,t) = - \beta(0) \int_\rr \mrho_0(a,t) da,  & a=0,\; t>0,\\
		& \mrho_0(a,0) = \rhoi(a) - \rhoz(a,0),& a>0,\; t=0.
	\end{aligned}
	\right.
$$
We study 
finely the decay properties of this initial layer. First, we characterize in a new way the 
behavior of the trace $\mrho_0(0,t)$ in some weighted $L^1(\rr;(1+t)^\alpha)$-space.
As the trace solves a convolution Volterra equation of the second kind, the concept of resolvent appears naturally \cite[Chapters 2, 4, 7, 9]{Gripenberg_ea}
and its specific properties are used there.
These results combined with Duhamel's formula allow to show some stability
of $\mrho_0$ in other weighted $L^1$-spaces in age and in time (see Section \ref{sec.initial.layer} for more precise statements).
By increasing the order of the asymptotic expansion, we can use the {\em a priori} estimates above, without dissipation, and still prove a convergence theorem.
Indeed, for any $t>0$, one has
$$ \cH[\rhoe(\cdot,t)-\rhoz(\cdot,t)-\e \rhou(\cdot,t)-\rz(\cdot,t/\e)]\lesssim  o_{\e}(1), $$ 
and the microscopic features of the initial layer (using again Section \ref{sec.initial.layer}) allow then to recover~:
$$\|\rhoe-\rhoz\|_{L^1(\rr\times(0,T),(1+a)))}\leq o_\e(1). $$ 

In \cite[Theorem 1.1]{MiOel.1}, the convergence of $\Xeps$ towards $\Xz$ is also established through error estimates 
thanks to a comparison principle specific to non-negative Volterra non-convolution kernels \cite[Chap. 9]{Gripenberg_ea}. In the proof of this theorem, again a restrictive hypothesis appears
on $\zeta$ \cite[Assumption 1.1 (i) p. 487]{MiOel.1} as well. Indeed, at some
point in the proof of \cite[Theorem 1.1]{MiOel.1}, one needs to control the magnitude of
$$
A_\e \left[\rhoe\right](t):=\frac{\int_\rr a \kernel(a+t/\e,t) da}{\int_\rr \kernel(a+t/\e,t) da}
$$
cf \cite[Lemma 2.4, p. 491-492]{MiOel.1}, this is possible if $\zeta(a+t/\e,t)$ is increasing
for $a>a_0$ where $a_0>0$ is given, for a.e. fixed $t>0$. 
Using weak-* convergence arguments from \cite{MiOel.2} on the elongation problem,  
we prove convergence of $\Xeps$ without any restriction related to $\zeta$.

Nevertheless, if we still  need  to obtain 
error estimates on $\Xeps$, in the case where $\kernel$ is a  given non-negative kernel
{\em constant in time}, we show, in a systematic way, how to construct 
an asymptotic expansion with respect to $\e$ of $\Xeps$ and obtain error estimates.
The novelty here is that we only use moments of $\kernel$, namely that for a fixed integer $N\geq 1$, 
$$
\int_\rr (1+a)^{N+2} \kernel(a) da < + \infty.
$$
We construct  a  $N^{th}$-order asymptotic approximation
\begin{equation}\label{approx_dev_Xe}
	\ov{X}_{\e,N}=X_{O}(t)+ X_{I}(\tau)+O(\e^N),
\end{equation}
where $X_O$ (resp. $X_I$) represent a far field 
(resp. near field) Ansatz and $\tau=t/\e$ is the stretched variable. Then we prove that
$$
\nrm{X_\e- \ov{X}_{\e,N}}{C([0,T])} \lesssim \e^N.
$$
The key point is that we avoid the need of evaluating ratios such as $A_\e[\rhoe](t)$ 
by a refined analysis of signed initial layers (see the proof of Theorem \ref{thm.main.first.part} and Remark \ref{rmk.super.sol} below).
}

The paper is structured as follows :  {in section \ref{section5}, we analyze the asymptotic behavior of $(\rhoe,X_\e)$ solutions of the weakly coupled problem \eqref{eq.rho.complet.bis}-\eqref{eq.X.eps} when $\e$ tends to $0$. 
In section \ref{section.3}, we analyze the  case where the kernel in \eqref{eq.X.eps}  depends on the age variable only but does not solve any specific equation. 
Moreover, we construct the asymptotic expansion of $X_\e$ and show, under regularity hypotheses on the data,  error estimates up to any order.}

\section{{The weakly coupled case}}\label{section5}

{We refer to \cite{Zie.Book} for 
a general framework for $\rm BV$ functions, our notations being coherent 
with this reference.}

\begin{assum}\label{hypo.data.general}
	Assume the kernel $\rhoe$ solves \eqref{eq.rho.complet.bis} and that the data satisfies :
	\begin{enumerate}[a)]
		\item the off-rate $\zeta$ is in $C([0,T];L^{\infty}(\rr))$ and there exists a non-increasing $m\in L^{1}(\rr;(1+a)^3)$ such that
		$$
		- \frac{m'(a)}{m(a)} \leq \zeta(a,t) \leq \ztmax ,\quad \alev a \in \rr.
		$$
		\item the birth-rate $\beta \in C(\rr)$ is such that
		$$
		0 < \bmin \leq \beta(t) \leq \bmax.
		$$
		\item the initial condition $\rho_{I}\in BV(\rr)\cap {L}^{\infty}(\rr)\cap L^{1}(\rr,(1+a)^3) $ 
		{is such that there exists $C_{\rhoi}>0$ so that} 
		$$\rho_{I}(a)\leq C_{\rhoi} \,  m(a), ~\text{for a.e. }~a\in \rr. $$
		\setcounter{saveenum}{\value{enumi}}
	\end{enumerate}
	Whereas $\Xeps$ solves \eqref{eq.X.eps} and the data satisfy
	\begin{enumerate}[a)]
		\setcounter{enumi}{\value{saveenum}}
		\item the source term  $f$ belongs to $C^2(\rr)$,
		\item the past data $\Xp \in L^\infty (\RR_-) \cap \Lip(\RR_-)$.
	\end{enumerate}
\end{assum}

\subsection{{Asymptotic expansion of linkages}}
We introduce {a} first order asymptotic approximation of $\rhoe$ solving \eqref{eq.rho.complet.bis}~:
\begin{equation}\label{rho.tilde.eps}
	\trhoe(a,t):=\rhoz(a,t) - \trho_0 (a,t) - \e \rhou(a,t)\end{equation}
where $\rhoz$ is the zeroth order macroscopic limit solving \eqref{eq.rho.zero},
$\rhou $ is the first order macroscopic profile given when solving 
\begin{equation}\label{eq.rho.un}
	\left\{
	\begin{aligned}
		& \left(\da + \zeta(a,t)\right) \rhou(a,t) = - \dt \rhoz(a,t), & a>0,\; t>0,\\
		& \rhou(0,t) = -\beta(t) \int_\rr \rhou(a,t) da,& a=0,\; t>0, 
	\end{aligned}
	\right.
\end{equation}
and $\trho_0(a,t) := \rz(a,t/\e)${ is  the {(rescaled)} initial layer $\rz$ solving
	\begin{equation}\label{eq.initial.layer}
		\left\{
		\begin{aligned}
			& \left(\dt + \da + \zeta(a,0) \right) \mrho_0(a,t) = 0, & a >0, \; t>0, \\
			& \mrho_0(0,t) = - \beta(0) \int_\rr \mrho_0(a,t) da,  & a=0,\; t>0,\\
			& \mrho_0(a,0) = \rhoi(a) - \rhoz(a,0),& a>0,\; t=0.
		\end{aligned}
		\right.
	\end{equation}  
	
}

\subsubsection{{Decay properties of outer expansions}}

\begin{propm}\label{prop.moments}
	Let Assumptions \ref{hypo.data.general} hold, then there exists generic constants $c_j>0$ and $c_{j,1}>0$, such that $\rhoz$ (resp. $\rhou$) solution of \eqref{eq.rho.zero}  (resp. \eqref{eq.rho.un}) satisfies 
	$$
	\left|\rho_j (a,t) \right|\leq c_j (1+a)^{2j} m(a),~\text{for}~j\in\{0,1\}.
	$$
	In a generic way, for all $j\in\{0,1\}~\text{and}~k\in \NN $, if $ \zeta\in W^{k,\infty}(\rr\times \rr)$ then~:
	$$
	\left| \partial_{t^k}^k \rho_{j} (a,t)\right| \leq c_{j,k} (1+a)^{2j+k} m(a).
	$$
\end{propm}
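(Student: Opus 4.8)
The plan is to integrate the age equations explicitly in $a$ and to replace the missing exponential-in-time decay (which in \cite{MiOel.1} came from Gr\"onwall with $\zeta\ge\ztmin$) by the $m$-weighted decay encoded in \eqref{Cond.zeta}. The cornerstone is the elementary comparison: since $m>0$ is non-increasing and $\zeta(\cdot,t)\ge -m'/m$ a.e., for every $0\le s\le a$,
$$
\exp\left(-\int_s^a \zeta(\sigma,t)\,d\sigma\right)\le \exp\left(\int_s^a \frac{m'(\sigma)}{m(\sigma)}\,d\sigma\right)=\frac{m(a)}{m(s)}.
$$
I would first treat $\rhoz$. Writing $Z(a,t):=\int_0^a\zeta(\sigma,t)\,d\sigma$, the age equation \eqref{eq.rho.zero} integrates to $\rhoz(a,t)=\rhoz(0,t)e^{-Z(a,t)}$, and inserting this into the renewal boundary condition yields the closed formula
$$
\rhoz(0,t)=\frac{\beta(t)}{1+\beta(t)\int_\rr e^{-Z(a,t)}\,da},
$$
whence $0\le \rhoz(0,t)\le \bmax$ (the denominator is $\ge 1$ and the integral is finite by the comparison above and $m\in L^1(\rr)$). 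Taking $s=0$ in the comparison then gives $|\rhoz(a,t)|\le \bmax\,m(a)/m(0)=:c_0\,m(a)$, i.e. the case $j=0$.

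For the time derivatives of $\rhoz$, I differentiate the representation $\rhoz=\rhoz(0,t)e^{-Z}$ $k$ times. Each $t$-derivative either falls on the bounded factor $\rhoz(0,t)$ (which is $C^k$ in $t$ provided $\beta\in C^k$ and $\zeta\in W^{k,\infty}$, its derivatives being controlled through the formula above) or on $e^{-Z}$, bringing down a factor $-\partial_t^p Z(a,t)=-\int_0^a \partial_t^p\zeta(\sigma,t)\,d\sigma$, which grows at most linearly in $a$, namely $|\partial_t^p Z|\le\|\partial_t^p\zeta\|_\infty\,a$. By the Leibniz/Fa\`a di Bruno bookkeeping the highest power of $a$ produced after $k$ differentiations is $a^k$, so the dominant contribution is $O(a^k)\,e^{-Z}\le c_{0,k}(1+a)^k m(a)$, which is exactly the $j=0$ part of the general estimate.

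I would then pass to $\rhou$ by variation of constants in $a$: \eqref{eq.rho.un} gives
$$
\rhou(a,t)=\rhou(0,t)e^{-Z(a,t)}-\int_0^a e^{-(Z(a,t)-Z(s,t))}\,\dt\rhoz(s,t)\,ds.
$$
Bounding the integral with the comparison inequality and the already established estimate $|\dt\rhoz(s,t)|\le c_{0,1}(1+s)m(s)$ gives an integrand $\le c_{0,1}(1+s)\,m(a)$, and the $s$-integration over $(0,a)$ produces one extra power of $(1+a)$, hence a bound $c(1+a)^2 m(a)$. The boundary value is fixed self-consistently exactly as for $\rhoz$,
$$
\rhou(0,t)=\frac{-\beta(t)\int_\rr G(a,t)\,da}{1+\beta(t)\int_\rr e^{-Z(a,t)}\,da},\qquad G:=-\int_0^a e^{-(Z(a,t)-Z(s,t))}\dt\rhoz(s,t)\,ds,
$$
and is bounded since $\int_\rr(1+a)^2 m(a)\,da<\infty$ (here $m\in L^1(\rr;(1+a)^3)$ is used). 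Collecting both terms yields $|\rhou|\le c_1(1+a)^2 m(a)$, the case $j=1$. The general $k$-th derivative bound for $\rhou$ then follows by differentiating this Duhamel formula: each $t$-derivative either hits the exponential (a factor $O(a)$) or raises the order of $\dt\rhoz$ by one (a factor $(1+s)$ by the $j=0$ derivative bounds), and the final $s$-integration adds one more power, so the worst term is $O((1+a)^{2+k})m(a)$.

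The main obstacle is not any single computation but the bookkeeping of the polynomial weights: one must verify that in every term produced by the Leibniz rule the factors $a$ coming from the $t$-derivatives of $Z$, the weight $(1+s)$ coming from the $\dt\rhoz$-estimates, and the power gained from the $a$-integration combine to at most $(1+a)^{2j+k}$, and that all the integrals defining and bounding the boundary values $\rhoz(0,t)$ and $\rhou(0,t)$ converge, which is precisely what the hypothesis $m\in L^1(\rr;(1+a)^3)$ guarantees. The conceptual point that makes the fat-tailed regime work is the replacement of the exponential Gr\"onwall estimate by the comparison $e^{-\int_s^a\zeta}\le m(a)/m(s)$.
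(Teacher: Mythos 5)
Your proof is correct and follows essentially the same route as the paper's: explicit integration in the age variable (Duhamel/variation of constants), boundary values $\rho_0(0,t)$ and $\rho_1(0,t)$ determined self-consistently from the renewal condition, the comparison $e^{-\int_s^a \zeta(\sigma,t)\,d\sigma}\le m(a)/m(s)$ in place of the exponential Gr\"onwall decay, and Leibniz/induction bookkeeping of the polynomial weights for higher time derivatives. The only cosmetic difference is your bound $\rho_0(0,t)\le \bmax$ versus the paper's $\bmax\ztmax/(\ztmax+\bmin)$; both suffice.
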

The proof uses equations \eqref{eq.rho.zero} and \eqref{eq.rho.un} together with  Assumptions \ref{hypo.data.general}
and is postponed to Appendix \ref{sec.proof}.
\subsubsection{{A complete study of the initial layer}}\label{sec.initial.layer}

One considers the problem \eqref{eq.initial.layer}
and defines $x(t):=\mathfrak{r}_0(0,t)$, then by using Duhamel's 
principle {\eqref{eq.initial.layer}} can be rewritten as 
\begin{equation}\label{eq.x.rho}
	\left\{
	\begin{aligned}
		& x+k\star x=b,\\
		& k(a):=\beta{(0)}\exp\left( -\int_{0}^{a}\zeta(\tau,0)d\tau\right),\\
		& b(t):=-\beta{(0)} \int_{t}^{\infty}\rz(a-t,0)\exp\left( -\int_{a}^{a-t}\zeta(\tau,0)d\tau\right)da.
	\end{aligned}\right.
\end{equation}
As a consequence of the Paley-Wiener theorem \cite[Theorem  4.1]{Gripenberg_ea} and the fact that $k$ is a decreasing function of $a$, \cite[p.264]{Gripenberg_ea}~:
\begin{thmm}
	If $k$ is a decreasing non-negative kernel such that $k\in L^{1}(\rr)$, then the resolvent associated to \eqref{eq.initial.layer} satisfies~: 
	$r+r\star k=k$ and $r\in L^{1}(\rr)$.	{Moreover, the solution is unique, explicit and reads :
	$$
	x=b-r\star b
$$
}
\end{thmm}

\begin{propm}\label{prop.trho}
	Let Assumptions \ref{hypo.data.general} hold. If moreover, $\rz(\cdot,0) \in L^1(\rr,(1+a)^2)$ 
	and  there exists a constant ${C_\rz}>0$ such that
	$$
	\rz(a,0) \leq {C_\rz} m(a),
	$$
	then
	{\begin{compactenum}[i)]
		\item  $x = \rz(0,\cdot) \in L^1_t(\rr;(1+t)^2)\cap L^\infty (\rr)$,  
		\item $
		\rz \in L^1(\rr \times \rr;(1+a)),
		$
		\item  there exists another  constant $c'>0$ such that
		$$
		\rz (a,t) \leq c' m(a).
		$$
	\end{compactenum}}
\end{propm}

\begin{proof}
	{By standard fixed point arguments \cite[Theorem 2.1, p. 497]{MiOel.1}, there exists a unique solution $\rz \in C(\rr;L^1(\rr))$, 
	Using \cite[Theorem 2.3.5 p. 44]{Gripenberg_ea}, as $b \in C(\rr)$, so is $x$.}
	Using the Assumption \ref{hypo.data.general}.a) on $\zeta$ and on the data, 
	{the source term $b$ in \eqref{eq.x.rho} satisfies~:}
	$$
	\left|b(t)\right| \leq \beta \int_t^\infty \rz(a-t,0) \frac{m(a)}{m(a-t)} da \leq \beta C_{\rz} \int_t^\infty m(a) da  
	$$
	which is bounded since $m\in L^1(\rr)$ and it is also integrable because so is the first moment of $m$. Writing then that
	$$
	x= b - r\star b
	$$
	and since $L^1$ is an algebra for the convolution, $x \in L^1(\rr)$. Because, $b$ is bounded and $r\in L^1{(\rr)}$, $x \in L^\infty(\rr)$ as well. {This proves partially ii), the part concerning higher moment with respect to time is postponed to the end of the proof.}

	Then in a similar way, using Duhamel's principle, one has~:
	$$
	\rz(a,t) := \begin{cases}
		\rz(0,t-a) \exp\left( - \int_0^a \zeta(\tia,0) d\tia \right), & \text{ if } t>a, \\
		\rz(a-t,0)  \exp\left( - \int_{a-t}^a \zeta(\tia,0) d\tia \right), & \text{ otherwise },
	\end{cases}
	$$
	so that
	$$
	\left|\rz(a,t)\right| \leq  \begin{cases}
		\nrm{x}{L^\infty} \frac{m(a)}{m(0)}, & \text{ if } t>a, \\
		\frac{\left|\rz(a-t,0)\right|}{m(a-t)} m(a) \leq {C_{\rz}} m(a), & \text{ otherwise },
	\end{cases}
	$$
	which gives $c'$ in the last estimates of the claim {iii)}. Next we consider~:
	$$\begin{aligned}
		\int_\rr \left|\rz(a,t) \right|& da \leq 
		\int_0^t \left|\rz(0,t-a) \right|\exp\left( - \int_0^a \zeta(\tia,0) d\tia \right) da \\
		& + \int_t^\infty \left|\rz(a-t,0)\right|  \exp\left( - \int_{a-t}^a \zeta(\tia,0) d\tia \right) da \\
		&\leq  \int_0^t\left| \rz(0,t-a) \right|\frac{m(a)}{m(0)} da + \int_t^\infty \rz(a-t,0) \frac{m(a)}{m(a-t)} da \\
		& = \frac{1}{m(0)} \left\{\left(\left|\rz(0,\cdot) \right|\star m\right) (t) + \int_\rr \frac{\left|\rz(\tia,0)\right|}{m(\tia)} m(\tia+t) d\tia \right\} \\
		& = \frac{1}{m(0)} \left(\left|x\right| \star m\right) (t) +  {C_{\rz}} \int_t^\infty m(a) da
	\end{aligned}
	$$
	Then using that $L^1(\rr)$ is an algebra for the convolution, and that $m\in L^1(\rr,(1+a))$, one concludes that
	$$
	\int_\rr \int_\rr \rz(a,t) da dt < \frac{\nrm{m}{L^1(\rr,(1+a))} }{m(0)} \left(\nrm{\rz(0,\cdot)}{L^1(\rr)} +{C_{\rz}} m(0)
	\right)
	$$
	the same holds for the first moment as well. {This proves ii).}

	Setting $q_1(a,t) := t \rz (a,t)$,
	{it solves
	$$
	\left\{
	\begin{aligned}
		& 	\left(\dt + \da +\zeta(a,0)\right)q_1(a,t) = \rz (a,t)	\\
		&	q_1(0,t) = - \beta(0) \int_\rr q_1 (\tia,t)	d\tia \\
		&	q_1(a,0)	 =0	 
	\end{aligned}
	\right.
	$$
in the sense of characteristics \cite[Lemma 2.1, p. 489]{MiOel.1}}.
Defining the trace $y(t) := q_1(0,t)$,{ and using Duhamel's principle, it can be shown
that $y$ solves : }
	$$
	\begin{aligned}
		y(t) = - & \beta(0) \int_0^t y(t-a) \exp\left(-\int_0^a \zeta(s,0) ds\right) da \\
		& - \beta(0) \int_0^t a x(t-a) \exp\left(-\int_0^a \zeta(s,0) ds\right) da \\
		&- \beta \int_t^\infty t \rz(a-t,0) \exp\left(-\int_{a-t}^{a} \zeta(s) ds\right) da 
		& {=: - (k\star y)(t)+ b_x(t)  }
	\end{aligned}
	$$
	{\em i.e.}
	$
	y+k\star y = b_x
	$.
	Assuming that $\int_\rr (1+a)^2 m(a) da<\infty$ shows that  $b_x \in L^1(\rr)$. 
	Then as above, 
	$$
	\begin{aligned}
		\int_0^t& \left| q_1(a,t)\right| da  \leq \int_0^t \left(\left|q_1(0,t-a) \right|+ a \left|x(t-a)\right| \right)\exp\left(-\int_0^a \zeta(s,0)ds\right) da \\
		& \leq C \left|q_1(0,\cdot)\right| \star m + \left|x(0,\cdot)\right| \star (\cdot) \,m (\cdot)
	\end{aligned} 
	$$
	together with
	$$
	\int_t^\infty \left|q_1(a,t)\right| da \leq t \int_t^\infty \mrho_0(a-t,0)\exp\left( - \int_{a-t}^a \zeta(\tau,0)d\tau\right) da
	\leq t {C_{\rz}} \int_t^\infty m(a) da
	$$
	both left-hand sides are then $L^1_t(\rr)$ functions in time,  provided that $m \in L^1_a(\rr,(1+a)^2)$. For the next step, one works similarly and obtains that
	$$
	\int_\rr \int_\rr t^2 \rz(a,t) da dt < \infty
	$$
	since $m$ is in $L^1_a(\rr,(1+a)^3)$. {This finishes the proof of i)}.
\end{proof}

\begin{corom}\label{corom.moment.initial.layer}
	Under the Assumptions \ref{hypo.data.general}, one has  $x(t):=\rz(0,t)\to 0$ when $t$ grows large and the first moment can be estimated as follows~:
	$$ \int_{\rr}(1+a)\left| \rz(a,t)\right|da\leq o(1)+c\int_{t}^{\infty}(1+a)m(a)da,$$
	where $o(1)$ denotes small when $t$ grows large.
\end{corom}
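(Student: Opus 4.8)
The plan is to prove the two assertions in turn, in both cases exploiting the renewal formulation $x+k\star x=b$ set up just before Proposition~\ref{prop.trho} and the bounds gathered there. For the decay $x(t)\to0$, I would use the resolvent representation $x=b-r\star b$, with $r\in L^1(\rr)$ the resolvent provided by the Theorem preceding Proposition~\ref{prop.trho}. The proof of that proposition already yields $|b(t)|\le c\int_t^\infty m(a)\,da$, so $b$ is bounded and, since $m\in L^1(\rr)$, tends to $0$ at infinity. The convolution $r\star b$ then vanishes at infinity as well: splitting $\int_0^t r(t-s)b(s)\,ds$ into $\int_0^{t/2}$ and $\int_{t/2}^t$, the first piece is bounded by $\|b\|_\infty\int_{t/2}^\infty|r|$, which vanishes because $r\in L^1$, and the second by $\|r\|_{L^1}\sup_{s\ge t/2}|b(s)|$, which vanishes because $b\to0$. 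Hence $x=b-r\star b\to0$.

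For the first-moment bound, I would start from the Duhamel representation of $\rz$ recalled in the proof of Proposition~\ref{prop.trho} and split the integral at $a=t$:
\[
\int_\rr(1+a)|\rz(a,t)|\,da
=\int_0^t(1+a)|x(t-a)|\,e^{-\int_0^a\zeta(\tia,0)\,d\tia}\,da
+\int_t^\infty(1+a)|\rz(a-t,0)|\,e^{-\int_{a-t}^a\zeta(\tia,0)\,d\tia}\,da.
\]
The key tool is the comparison $e^{-\int_c^d\zeta(\tau,0)\,d\tau}\le m(d)/m(c)$ for $c\le d$, which follows at once from Assumptions~\ref{hypo.data.general}.a). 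Applied to the second integral, together with the data bound $|\rz(a-t,0)|\le c\,m(a-t)$ (coming from $\rz(\cdot,0)=\rhoi-\rhoz(\cdot,0)$ and Assumptions~\ref{hypo.data.general}.c), it produces exactly $c\int_t^\infty(1+a)m(a)\,da$. Applied to the first integral it gives $\frac{1}{m(0)}(g\star|x|)(t)$ with $g(a):=(1+a)m(a)\in L^1(\rr)$.

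It then remains to show that $(g\star|x|)(t)=o(1)$, which is the crux of the argument: since $g\in L^1(\rr)$, $|x|\in L^\infty(\rr)$ by Proposition~\ref{prop.trho}, and $x(t)\to0$ by the first part, the very same split-the-convolution estimate as above yields $(g\star|x|)(t)\to0$. Adding the two contributions gives the claimed inequality.

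The only subtlety I anticipate is the interaction of the growing weight $(1+a)$ with the convolution structure in the first integral; this turns out to be benign because the weight stays attached to $m$, so that $g=(1+a)m$ remains integrable and the decay of $x$ established in the first step supplies the rest. The main obstacle is thus really the passage from the integrability of $x$ to its pointwise decay at infinity, which is precisely what the resolvent representation and the two-region splitting of the convolutions deliver.
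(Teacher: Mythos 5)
Your proof is correct, and its second half (Duhamel splitting at $a=t$, the comparison $e^{-\int_c^d\zeta(\tau,0)\,d\tau}\le m(d)/m(c)$, and the $t/2$-splitting of the convolution $(1+a)m(a)\star|x|$) coincides with the paper's treatment of the moment bound. Where you genuinely diverge is the first assertion, $x(t)\to 0$: the paper never invokes the resolvent representation here. Instead it proves that $x$ is Lipschitz, by introducing the discrete time differences $\Drz$, observing they solve the same renewal-type system, propagating the Lyapunov functional $\cH[\Drz(\cdot,t)]\le\cH[\Drz(\cdot,0)]$ via Gronwall, and bounding the initial term using the $BV$ regularity of $\rz(\cdot,0)$ (inherited from $\rhoi\in BV(\rr)$); combining $x\in\Lip(\rr)$ with $x\in L^1(\rr)$ from Proposition \ref{prop.trho} then forces $x(t)\to0$. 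Your route -- writing $x=b-r\star b$, noting $|b(t)|\le c\int_t^\infty m(a)\,da\to0$ with $b$ bounded, $r\in L^1(\rr)$ by the Paley--Wiener theorem already quoted in the paper, and applying the standard ``$L^1$ convolved with bounded-and-vanishing tends to zero'' splitting -- is shorter, avoids the discrete-difference and $BV$ machinery altogether (so it does not use Assumption \ref{hypo.data.general}.c)'s $BV$ requirement for this step), and reuses only facts the paper has already established. What the paper's longer argument buys in exchange is a quantitatively stronger piece of information, namely the Lipschitz regularity of $x$ on $\rr$, which your argument does not yield; but for the statement of the corollary itself your proof is complete and sound.
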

\begin{proof}
	Using Lyapunov's functional \eqref{lyapunov-fct}, one has that
	$$\int_{\rr}\left| \rz(a,t)\right|da\leq \infty$$
	which, thanks to the boundary condition, provides that $x(t)$ is bounded on $\rr$. This shows using Duhamel's principle that $\rz\in L^{\infty}(\rr\times\rr)$.
	Moreover, defining the discrete differences 
	$$ 
	\Drz(a,t):=\cfrac{\rz(a,t+h)-\rz(a,t)}{h}
	$$
	 solve the problem~:
	\begin{equation}\label{eq.disc.diff}
		\left\{
		\begin{aligned}
			& \left(\dt + \da + \zeta(a,0) \right) \Drz = 0, & a >0, \; t>0, \\
			& \Drz(0,t) = - \beta(0) \int_\rr \Drz(a,t) da,  & a=0,\; t>0,\\
			& \Drz(a,0) = \Frac{\rz(a,h)-\rz(a,0)}{h},& a>0,\; t=0.
		\end{aligned}
		\right.
	\end{equation}
	Thanks to Assumptions \ref{hypo.data.general} {and Proposition \ref{prop.moments}}, one shows that 
	$\rz(\cdot,0):=$ ${(\rhoi(\cdot)}$ ${-\rhoz(\cdot,0))}$ $\in BV(\rr)$. Moreover, one has that 
	$$
	\left|D^{h}_{t}x(t) \right|=\left|\Drz(0,t) \right|\leq \beta(0)\int_{\rr}\left|\Drz(a,t) \right|da\leq \beta(0)\mathcal{H}[\Drz(a,t)] 
	$$
	{similarly as in \cite[Lemma 3.1 p. 493, and Lemma 3.3, p. 495]{MiOel.1}}, 
	we obtain that, for all $t>0$, 
	$$ 
	\mathcal{H}[\Drz(\cdot,t)] \leq \mathcal{H}[\Drz(\cdot,0)].
	$$
	It suffices then to prove that the initial term $\mathcal{H}[\Drz(\cdot,0)] $ is bounded. Indeed, we have 
	$$ \mathcal{H}[\Drz(a,0)]=\int_{\rr}\left| \Frac{\rz(a,h)-\rz(a,0)}{h}\right|da+\left|\Frac{\mu_{\rz}(h)-\mu_{\rz}(0)}{h}\right|:=I_1+I_2$$
	where $\mu_{\rz}(t):=\int_{\rr}\rz(a,t)da $. For the first term, we split the integral in two parts
	\begin{equation*}
		\begin{aligned}
			I_1&=\int_{0}^{h}\left| \Frac{\rz(a,h)-\rz(a,0)}{h}\right|da  +\int_{h}^{+\infty}\left| \Frac{\rz(a,h)-\rz(a,0)}{h}\right|da\\
			&= \Frac{1}{h}\int_{0}^{h}\left| \rz(0,h-a)\exp\left( -\int_{0}^{a}\zeta(\tia,0)d\tia\right)-\rz(a,0)\right|da \\
			&+ \Frac{1}{h}\int_{h}^{+\infty}\left| \rz(a-h,0)\exp\left( -\int_{a-h}^{a}\zeta(\tia,0)d\tia\right)-\rz(a,0)\right|da=:I_{1,1}+I_{1,2}
		\end{aligned}
	\end{equation*}
	where we used Duhamel's principle. It easy to see that the first term 
	$$ I_{1,1}\lesssim\Frac{1}{h}\int_{0}^{h}da\left(\left| \beta(0)\right|\|\rz\|_{L^{\infty}_{t}L^{1}_{a}}+ \|\rz(a,0)\|_{L^{\infty}(\rr)} \right)< \infty.$$
	Concerning $I_{1,2}$, one splits the integral adding and subtracting intermediate terms
	\begin{equation*}
		\begin{aligned}
			I_{1,2}&\leq \Frac{1}{h}\int_{h}^{+\infty}\left| \left(\rz(a-h,0)
			-\rz(a,0) \right)\exp\left( -\int_{a-h}^{a}\zeta(\tia,0)d\tia\right)\right|da\\
			&+ \Frac{1}{h}\int_{h}^{+\infty}\left| \rz(a,0)\exp\left( -\int_{a-h}^{a}\zeta(\tia,0)d\tia\right)-\rz(a,0)\right|da\\
			&\lesssim  TV(\rz(\cdot,0))+{\ztmax}\|\rz(a,0)\|_{L^{1}(\rr)}
		\end{aligned}
	\end{equation*}
	where $TV$ denotes total variation of $\rz(\cdot,0)$ \cite{Jonel}. For the second term $I_2$ noting that $$\left|\dt\mu_{\rz}(t)\right|\leq \left( \zeta_{\max}+\beta_{\max}\right)\|\rz\|_{L^{\infty}_{t}L^{1}_{a}},$$ one obtains 
	$$ I_2\leq \left( \zeta_{\max}+\beta_{\max}\right)\|\rz\|_{L^{\infty}_{t}L^{1}_{a}},$$
	and finally, we obtain that $ \mathcal{H}[\Drz(a,t)] < \infty$, for all $t>0$, which shows that $x\in \Lip(\rr)$. Since $x\in L^{1}(\rr)$ (see Proposition \ref{prop.trho}), this implies that $\lim_{t\to+\infty}x(t)=0$. Now, we consider 
	\begin{equation*}
		\begin{aligned}
			J(t):=\int_{0}^t(1+a)& \left| \rz(a,t)\right|da\leq \int_{0}^{t} \left|x(t-a) \right|(1+a)m(a)da\\
			&=\left( \int_{0}^{t/2}+\int_{t/2}^{t}  \right)\left|x(t-a) \right|(1+a)m(a)da=:J_1+J_2.
		\end{aligned}
	\end{equation*}
	For every $\delta>0$ there exists $\eta_{1}$ such that $t>\eta_1$, implying that 
	$$ \sup_{s\in (\frac{t}{2},t)}\left| x(s)\right|<\Frac{\delta}{\left(2\int_{\rr}(1+a)m(a)da\right)} $$
	which shows that $J_1<\delta/2$. On the other hand, there exists $\eta_2$ such that $t>\eta_2$ which implies that
	$$ J_2\leq \|x\|_{L^{\infty}(\rr)}\int_{t/2}^{t}(1+a)m(a)da< \delta/2, $$
	by Lebesgue's Theorem (since the integral of $(1+a)m(a)$ is finite). These arguments show that $J(t)$ vanishes when $t$ grows large. On the other hand, {from Proposition \ref{prop.trho} iii)}~:
	$$ \int_{t}^{\infty}(1+a)\left| \rz(a,t)\right|da\leq c' \int_{t}^{\infty}(1+a)m(a)da,$$ which is an initial layer.
\end{proof}
\subsubsection{Error estimates for the linkage's density}
We define the difference 
$
\hrhoe(a,t) := \rhoe(a,t) -  \trhoe(a,t)
$ where $\trhoe $ is defined by \eqref{rho.tilde.eps}.
We obtain~:
\begin{thmm}\label{thm.error.estimates}
	Under Assumptions \ref{hypo.data.general}, 
	one has that
	\begin{equation}\label{eq.est.err.H}
		\cH[\hrhoe] (t) \leq  o_\e(1) ,\quad \alev t \in \rr
	\end{equation}
	where $\cH[u] (t) := \int_\rr \left| u(a)\right| da + \left| \int_\rr u(a) da\right|$
	{and  $o_\e(1)$ means small when $\e$ is small.}.
\end{thmm}

\begin{proof}
	We write the system satisfied by $\hrhoe$~:
	\begin{equation}\label{eq.error.zero}
		\left\{
		\begin{aligned}
			&\begin{aligned}
				(\e \dt + \da + \zeta(a,t)) \hrhoe(a,t) = 
				- \e  \left( \zeta(a,t) - \zeta(a,0)\right) \trho_0(a,t) - \e^2 \dt \rhou 
			\end{aligned} \\
			& \begin{aligned}
				\hrhoe(0,t) =&  - \beta(t) \int_\rr \hrhoe(a,t) da 
				- \left(\beta(t) - \beta(0)  \right) \int_\rr \trho_0(a,t) da 
			\end{aligned}\\
			& \hrhoe(a,0) = - \e \rhou(a,0)
		\end{aligned}
		\right.
	\end{equation}
	Then following the same steps as in {\cite[Lemma 3.1 and 3.3, p. 493-495]{MiOel.1}}, one has that
	$$
	\begin{aligned}
		\e \ddt{} &\cH[\hrhoe](t) + \int_\rr \zeta(a,t)\left\{ \left| \hrhoe(a,t)\right| +\hrhoe(a,t) \sgn\left( \int_\rr \hrhoe(\tia,t)d\tia\right)\right\} da \\
		& \leq 2 \e^2\int_{\rr} \left| \dt\rhou(a,t)\right|da+2\int_{\rr}\left|\zeta(a,t) - \zeta(a,0) \right|\left|\trho_0(a,t) \right|da \\
		&+\left| \beta(t)-\beta(0)\right|\int_{\rr}\left| \trho_0(a,t)\right|da
	\end{aligned}
	$$
	which after integration in time provides~:
	$$
	\begin{aligned}
&		\cH[\hrhoe](t) \leq  \cH[\hrhoe](0)  + 2 \e \int_0^t \int_\rr \left| \dt \rhou(a,s)\right| da ds \\
		& +  2 \int_0^{\tse} \int_{\rr}\left|\zeta(a,\e \tit) - \zeta(a,0) \right|\left|\rz(a,\tit) \right|da d\tit 
		 +2 \int_0^{\tse} \left| \beta(\e \tit)-\beta(0)\right|\int_{\rr}\left| \rz(a,\tit)\right|da d\tit.
	\end{aligned}
	$$
	Now, here the crucial point is that, thanks to Lebesgue's Theorem, the last two terms of the right-hand side do tend to zero as $\e$ goes to zero.
\end{proof}

{Thanks to the study of initial layer's properties from Section \ref{sec.initial.layer}, one can
	recover a sharper result on the zeroth order expansion of $\rhoe$ estimating the contribution of higher order terms.}
\begin{corom}\label{cor.estimate}
	Let Assumptions \ref{hypo.data.general} hold, then one has
	$$ \int_{\rr}(1+a)\left|\rhoe(a,t)-\rhoz(a,t) \right|da\leq o_{\e}(1)+\int_{\tse}^{\infty} (1+a)m(a)da.$$
\end{corom}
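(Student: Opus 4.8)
The plan is to exploit the first-order decomposition \eqref{rho.tilde.eps}. Since $\trhoe = \rhoz - \trho_0 - \e \rhou$ and $\hrhoe = \rhoe - \trhoe$, one has the exact identity $\rhoe - \rhoz = \hrhoe - \trho_0 - \e \rhou$, so that, at a fixed $t$,
$$
\int_\rr (1+a)\left|\rhoe(a,t) - \rhoz(a,t)\right| da \leq \int_\rr (1+a)|\hrhoe(a,t)|\,da + \int_\rr (1+a)|\trho_0(a,t)|\,da + \e \int_\rr (1+a)|\rhou(a,t)|\,da,
$$
and I would estimate the three contributions separately. The third one is immediate: Proposition \ref{prop.moments} gives $|\rhou(a,t)| \leq c_1 (1+a)^2 m(a)$, whence $\e \int_\rr (1+a)|\rhou|\,da \leq \e\, c_1 \int_\rr (1+a)^3 m(a)\,da = o_\e(1)$, the last integral being finite because $m \in L^1(\rr;(1+a)^3)$. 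The second one is precisely the quantity bounded in Corollary \ref{corom.moment.initial.layer}: recalling $\trho_0 = \rz(\cdot,\tse)$ and that $\tse \to \infty$ as $\e \to 0$ for fixed $t$, it yields $\int_\rr (1+a)|\trho_0|\,da \leq o_\e(1) + c\int_\tse^\infty (1+a) m(a)\,da$, which already produces the tail term of the announced estimate.

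The genuinely delicate point is the first contribution $\int_\rr (1+a)|\hrhoe|\,da$, because Theorem \ref{thm.error.estimates} controls only the \emph{unweighted} Lyapunov functional $\cH[\hrhoe] \leq o_\e(1)$, i.e. the zeroth moment. I would close this gap by interpolating that $L^1$-smallness against a uniform pointwise tail bound, rather than by a weighted Lyapunov estimate. First I would record the uniform-in-$\e$ bound $\rhoe(a,t) \leq C m(a)$, obtained exactly as in Proposition \ref{prop.trho} by writing $\rhoe$ along characteristics (Duhamel), using $\rho_I \leq c m$ together with the lower bound $\zeta \geq -m'/m$ of Assumptions \ref{hypo.data.general}.a), and the fact that the renewal value $\rhoe(0,t) = \beta(t)(1-\int_\rr \rhoe\,da)$ is bounded by $\bmax$. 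Combined with the bounds on $\rhoz$, $\trho_0$ and $\e\rhou$ already invoked, this gives a constant $C'$ independent of $\e$ and $t$ with $|\hrhoe(a,t)| \leq C'(1+a)^2 m(a)$.

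With this tail control in hand, for every $R>0$ I would split
$$
\int_\rr (1+a)|\hrhoe(a,t)|\,da \leq (1+R)\int_0^R |\hrhoe(a,t)|\,da + C'\int_R^\infty (1+a)^3 m(a)\,da \leq (1+R)\,\cH[\hrhoe](t) + C'\int_R^\infty (1+a)^3 m(a)\,da.
$$
Given $\delta>0$, I would first fix $R$ large enough that the convergent tail $C'\int_R^\infty (1+a)^3 m(a)\,da < \delta/2$, and then, $R$ being frozen, use $\cH[\hrhoe](t) = o_\e(1)$ to ensure $(1+R)\,\cH[\hrhoe](t) < \delta/2$ for $\e$ small; since $\delta$ is arbitrary this proves $\int_\rr (1+a)|\hrhoe|\,da = o_\e(1)$. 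Adding the three contributions then gives the claim. I expect the interpolation to be the crux: a direct $(1+a)$-weighted Lyapunov balance produces, after integration in time, a factor $\e^{-1}\int_0^t \cH[\hrhoe]\,ds$ which need not vanish (as $\cH[\hrhoe]$ is merely $o_\e(1)$ and not $O(\e)$), so that the pointwise-tail argument above is the reliable route.
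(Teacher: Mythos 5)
Your proof is correct, but it takes a genuinely different route from the paper's. The paper never decomposes $\int_\rr(1+a)|\rhoe-\rhoz|\,da$ through the asymptotic expansion; instead it writes the transport problem satisfied by $e=\rhoe-\rhoz$ and solves it along characteristics (Duhamel's formula \eqref{duhamel.formula}), splitting the weighted integral at $a=\tse$: on $\{a<\tse\}$ the boundary trace $e(0,\cdot)$ is estimated by combining Theorem \ref{thm.error.estimates} with Corollary \ref{corom.moment.initial.layer}, while on $\{a>\tse\}$ the initial datum $\rhoi-\rhoz(\cdot,0)$, bounded by $c\,m$, propagates into the tail term $c\int_\tse^\infty(1+a)m(a)\,da$, the source $\e\dt\rhoz$ contributing $O(\e)$ via Proposition \ref{prop.moments}. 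Your replacement of this mechanism --- interpolating the unweighted bound $\cH[\hrhoe]=o_\e(1)$ against a pointwise envelope $|\hrhoe(a,t)|\leq C'(1+a)^2m(a)$ by cutting at a radius $R$ chosen after $\delta$ --- is a valid and clean way to upgrade $L^1$-smallness to weighted smallness, and your closing remark (that a direct $(1+a)$-weighted Lyapunov balance produces an unusable $\e^{-1}\int_0^t\cH$ term) correctly identifies why some such device is necessary. What your route costs: the envelope requires the uniform bound $\rhoe(a,t)\leq C m(a)$, hence non-negativity of $\rhoe$ and $\int_\rr\rhoe\,da\leq 1$ (so that $\rhoe(0,t)\leq\bmax$); these are standard facts from the earlier papers, used implicitly elsewhere, but they are an extra ingredient the paper's proof does not need, since it only ever evaluates the $\cH$-estimate on the boundary trace $e(0,\cdot)$. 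Note also that Duhamel is not really avoided: you apply it to $\rhoe$ itself rather than to $e$. What your route buys: modularity (Theorem \ref{thm.error.estimates}, Corollary \ref{corom.moment.initial.layer} and Propositions \ref{prop.moments}, \ref{prop.trho} are used as black boxes), and insensitivity to the sign inconsistency between \eqref{rho.tilde.eps} and the error system \eqref{eq.error.zero}, since the triangle inequality $|\rhoe-\rhoz|\leq|\hrhoe|+|\trho_0|+\e|\rhou|$ holds under either convention. Both arguments deliver the same quality of conclusion, including the harmless multiplicative constant in front of the tail integral that the statement of the corollary omits.
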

\begin{proof}
	Considering the system solved by the difference $e(a,t):=\rhoe(a,t)-\rhoz(a,t)$
	
	\begin{equation}\label{eq.error.e}
		\left\{
		\begin{aligned}
&				(\e \dt + \da + \zeta(a,t)) e(a,t) = \e \dt \rhoz(a,t), &a>0, \; t>0, \\
&				e(0,t) =  - \beta(t) \int_\rr \left(\rhoe(\tia,t)-\rhoz(\tia,t)-\trho_0(\tia,t)\right) da \\  & \qquad\qquad
				-\beta(t) \int_\rr\trho_0(a,t) da, & a=0, \;t>0, \\
			& e(a,0) = \rhoi(a)-\rhoz(a,0), &a>0,  \;t=0.
		\end{aligned}
		\right.
	\end{equation}
	It satisfies \eqref{eq.error.e} in the sense of characteristics \cite[Lemma 2.1 and 3.1]{MiOel.1}, namely
	
	\begin{equation}\label{duhamel.formula}
		e(a,t)=
		\begin{cases}
			e\left(0,t-\e a\right)\exp(-\int_{-a}^{0}\zeta(a+s,t+\e s)ds) +\\
			\quad + \e\int_{-a}^{0}\dt\rhoz(a+s,t+\e s)\exp(-\int_{s}^{0}\zeta(a+\tau,t+\e \tau)d\tau)ds, &  a < t/\e, \\
			
			e\left(a-t/\e,0\right)\exp(-\int_{t/\e}^{0}\zeta(a+s,t+\e s)ds) +\\
			\quad +\e\int_{-t/\e}^{0}\dt\rhoz(a+s,t+\e s)\exp(-\int_{s}^{0}\zeta(a+\tau,t+\e \tau)d\tau)ds,&  a > t/\e. \\
		\end{cases}
	\end{equation}
	One has that
	\begin{equation}
		\begin{aligned}
			\int_{0}^{+\infty} (1+a)e(a,t)da&=\int_{0}^{t/\e} (1+a)e(a,t)da+\int_{t/\e}^{+\infty} (1+a)e(a,t)da\\
			&:=I_1+I_2.
		\end{aligned}
	\end{equation}
	We treat each term separately because they correspond to two cases {distinguished in} \eqref{duhamel.formula}~:
	\begin{equation*}
		\begin{aligned}
			I_1(t)&\leq \int_{0}^{\tse} (1+a)\left|e(0,t-\e a)\right|\exp\left( -\int_{-a}^{0}\zeta(a+s,t+\e s)ds\right)da\\
			&+\e \int_{0}^{\tse}(1+a)\int_{-a}^{0}\dt\rhoz(a+s,t+\e s)\exp\left( -\int_{s}^{0}\zeta(a+\tau,t+\e \tau)d\tau\right)dsda.
		\end{aligned}
	\end{equation*}
Using Theorem \ref{thm.error.estimates} and Corollary \ref{corom.moment.initial.layer}, 
one has that~:
	$$
	 \left| e(0,t-\e a)\right|\lesssim o_{\e}(1)+\int_{\tse -a} ^{\infty}m(a)da,
	 $$
	which thanks to Proposition \ref{prop.moments} gives that
	$$
	\begin{aligned}
		I_1(t)\leq& o_{\e}(1)\int_{0}^{\tse}(1+a)m(a)da+\int_{0}^{\tse}(1+a)m(a)\int_{\tse-a}^{\infty}m(\tia)d\tia da\\
		&+\e c_{0,1}\int_{\rr}(1+a)^3m(a)da, 
	\end{aligned}
	$$
	using similar argument as in the proof of Corollary \ref{corom.moment.initial.layer}, one shows that
	$$\int_{0}^{\tse}(1+a)m(a)da\int_{\tse-a}^{\infty}m(\tia)d \tia da\leq o_{\e}(1),$$
	since $(1+a)m(a)$ is integrable and by Lebesgue's Theorem $\int_{t}^{\infty}m(a)da \to 0$ as $t$ grows large. On the other hand,
$$
		\begin{aligned}
			I_2&\leq \int_{\tse}^{\infty}(1+a)\left| e(a-\tse,0)\right|\exp\left( -\int_{-\tse}^{0}\zeta(a+s,t+\e s)ds\right)da\\
			&+\e \int_{\tse}^{\infty}(1+a)\int_{-\tse}^{0}\left| \dt \rhoz(a+s,t+\e s)\right|\exp\left( -\int_{s}^{0}\zeta(a+\tau,t+\e \tau)d\tau\right)dsda\\
			&\leq c\int_{\tse}^{\infty}(1+a)m(a)da+\e \int_{\rr}(1+a)^3m(a)da.
		\end{aligned}
$$
\end{proof}
\subsection{Convergence results for the position}

\begin{thmm}\label{error.estimate.general}
	
	Under Assumptions \ref{hypo.data.general}, if $\rhoe$ is a solution of \eqref{eq.rho.complet.bis}, $\rhoz$ solves \eqref{eq.rho.zero}, $X_\e$ is a solution of \eqref{eq.X.eps} and $X_0$ solves \eqref{eq.outer.exp.zero} then
	$$ 
	\|X_\e-X_0\|_{C([0,T])}\leq o_\e(1).
	$$
\end{thmm}

\begin{proof}
		Setting $\veps(a,t) := (\Xeps(t)  - \Xeps(t-\e a))/\e$ where $\xeps(t)=\xp(t)$ when $t<0$, $\veps$ solves in a weak sense \cite{MiOel.2}~:
	\begin{equation}\label{eq.veps}
		\left\{
		\begin{aligned}
			& (\e \dt + \da) \veps = \dt \Xeps = \frac{1}{\muze} \left\{ \e \dt f + \int_\rr \veps(a,t) \zeta(a,t) \rhoe(a,t) da \right\},& a>0,\; t>0,		\\
			&	\veps(0,t) = 0, & a=0,\; t>0,	\\
			&			\veps(a,0) = \vepsi(a) := \frac{\Xeps(0)-\Xp(-\e a)}{\e}, & a>0,\; t=0,
		\end{aligned}
		\right.
	\end{equation}
since problem \eqref{eq.X.eps} can be expressed in a integro-differential equation :
$$
\mu_{0,\e} (t) \dt \Xeps = \e \dt f + \int_\rr \left(\frac{ \Xeps(t) - \Xeps(t-\e a)}{\e}\right)\zeta (a,t) \rhoe (a,t) da. 
$$
Following \cite[Theorem 6.1]{MiOel.2}, one has that
	$$
	\begin{aligned}
		\int_\rr  \left|\veps(a,t)\right| & \rhoe(a,t) da 
		& \leq  \int_0^t \left| \dt f(\tau)\right| d\tau + \left|f(0)\right| + L_{\Xp} \mu_{1,\max} =: c_1
	\end{aligned}
	$$
Moreover, 	
	one has also the bound :
	$$
	\| \dt \Xeps \|_{L^\infty (0,T)} \leq \frac{1}{\mu_{0,\min}} \left\{ \e  \| \dt f \|_{L^\infty (0,T)} + \ztmax c_1  \right\} =: c_2
	$$
	which provides thanks to Ascoli-Arzella that there exists a converging sub-sequence $\Xeps$ 
	in $C([0,T])$. Moreover, it $t>\e a$, using Duhamel's principle, 
	$$
	\left|\veps(a,t)\right| \leq \ue \int_{t- \e a}^t\left| \dt \Xeps(\tau) \right|d\tau \leq  c_2 a
	$$
	whereas if $t< \e a$, by similar arguments, 
	$$
	\begin{aligned}
		\left|\veps(a,t)\right| 
		& \leq \tse c_2 + \left| \frac{f(0)}{\muzmin}\right| + L_{\Xp} \frac{\mu_{1,\max}}{\muzmin}
	\end{aligned}
	$$
Thus, $\veps (a,t)/(1+a) \in L^\infty (\rr \times0,T)$ uniformly with respect to $\e$. These results
provide that $\veps$ weak-* converges in $L^\infty (\rr\times(0,T);(1+a)^{-1})$ 
to $\vz$ a weak solution of
\begin{equation}\label{eq.vz}
	\left\{
	\begin{aligned}
		& 	\da \vz = \dt \Xz = \frac{1}{\mu_{0,0}} \int_\rr \vz{(a,t)} \zeta(a,t) \rhoz(a,t) da 	\\ 
		&	\vz (0,t) = 0			
	\end{aligned}
	\right.
\end{equation}
which shows that 
$\vz(a,t) = a \Xz(t)$. Since \eqref{eq.X.eps} reads as 
$$
A_\e := \int_0^T \int_\rr \veps(a,t) \rhoe(a,t) da \varphi(t)dt = \int_0^T f(t) \varphi(t) dt,\quad \forall \varphi \in L^1(0,T),
$$
ans since $\dt \Xeps \wscvg \dt \Xz$ weak-$\star$ in $L^\infty(0,T)$ together with
$$
\rhoe \to \rhoz \text{ in } L^1(\rr\times(0,T);(1+a)) , \quad \veps \wscvg \vz \text{ in } L^\infty(\rr \times (0,T); (1+a)^{-1})
$$
one concludes that
$$
A_\e \to \int_0^T \int_\rr \vz(a,t) \rhoz(a,t) da \varphi(t) dt = \int_0^T \mu_{0,1} (t) \dt \Xz (t) \varphi dt = \int_0^T f(t) \varphi(t) dt .
$$
\end{proof}

\section{The linkages' density is a generic function constant in time}\label{section.3}

In this section, we consider problem \eqref{eq.X.eps} with a given kernel $\kernel$ constant in time. 
It is not supposed to solve any particular problem, what only matters are its moments with respect to $a$.
Define {the delay operator $\cL_\e$ as }
$$
\cL_\e[X](t) := \ue \left\{\mu_0 X(t) - \int_0^{\tse} X(t-\e a) \kernel(a) da\right\}.
$$
Then problem \eqref{eq.X.eps} can be rephrased as :
\begin{equation}\label{eq.X.operator.form}
	\cL_\e [\Xeps](t) = f(t)+ \ue \int_\tse^\infty \Xp(t-\e a ) \kernel(a)da, \quad \forall t>0	
\end{equation}
and we aim at constructing {an asymptotic expansion} $\ov{X}_{\e,N}$ such that it satisfies 
\begin{equation}\label{eq.tixen}
	\cL_\e[{\ov{X}}_{\e,N}] = f(t)+ \ue \int_\tse^\infty \Xp(t-\e a ) \kernel(a) da + O(\e^N).	
\end{equation}
{and we aim at showing that actually it is $\e^{N+1}$ accurate {\em i.e.}
	$\| \Xeps - \ov{X}_{\e,N}\|_{C([0,T])}\lesssim \e^{N+1}$.}

Here,  capital letters $(X_i)_{i\in \NN}$ denote the macroscopic correctors
defined on $[0,T]$, {they are independent on $\e$}, and the  microscopic correctors  $(x_{i,j})_{(i,j)\in \NN^2}$ or $(w_k)_{k\in \NN}$ are defined on {$\rr$}. They are then 
renamed with a tilde when rescaled with respect to~$\e$~: $\tix_{i,j}(t):=x_{i,j}(t/\e)$ for $t\in (0,T)$ and $(i,j)\in \N^2$. 
The  functions $\tix_{i,j}(t)$ correct tails associated to 
\begin{equation}\label{eq.def.Theta}
	{\Rjk(t) :=\left(\tse\right)^j \int_\tse^\infty a^k \kernel(a) da=:\rjk\left(\tse\right),}
\end{equation}
solving at the microscopic scale
$$
\cL_1[x_{i,j}](t):=\mu_0 x_{i,j} (t) - \int_0^t x_{i,j}(t-a) \kernel(a) da = \rjk\left(t\right)
$$
while $\tiw_{\ell}$ take into account other tails related to kernel $\kernel$ reading 
$$
\cL_1[w_\ell](t) = \mu_0 w_\ell (t) - \int_0^t w_\ell(t-a) \kernel(a) da =  \int_t^\infty  (t-a)^\ell \kernel(a) da.
$$
These tails occur when expanding the delay operator through Taylor series.
We give ourselves some hypotheses on the data~: 

\begin{assum}\label{hypo.data.cst}
	Assume that :
	\begin{compactenum}[i)]
		\item the source term is such that $f \in C^N(\rr)$.
		\item the past condition $X_p\in C^{N+1}(\rr).$	
		\item {we assume that $\kernel$ is a measurable non-negative fonction, {\em i.e.} $\kernel : \rr \to \rr$. Moreover, }
		for all $a \in \rr$, there exists $M \subset (a,\infty)$, $M$ compact and $|M|>0$
		such that $\kernel(\tia) >0$ for almost every $\tia \in M$.
		\item {defining the $k^{\text{th}}$ order moment of $\kernel$ as 
			$\mu_k := \int_\rr a^k \kernel(a) da$, for $k \in \NN$, one assumes that 
			$\mu_{k} < \infty$ for $k \in \{0,\dots,N+2\}$.}
		
	\end{compactenum}
\end{assum}

\subsection{Construction of the expansion}
{In order to approximate the solution $\Xeps$ of \eqref{eq.X.eps} within the framework defined using the previous hypotheses,}  we  {construct} of the terms forming the $N^{th}$-order approximation of $X_\e$ solution of \eqref{eq.X.eps} for a fixed kernel as
\begin{equation}\label{N.asymp.exp}
	{\ov{X}}_{\e,N} := \underbrace{\sum_{i=0}^{N-1} \e^i X_i(t)}_{\text{outer expansion}} + \underbrace{Y_N(t) + Z_N(t) + W_N(t)}_{\text{inner expansion}} ,
\end{equation}
where these terms are set later on.

\begin{propm}\label{prop.cle.xi}
	{Let Assumtions \ref{hypo.data.cst} hold}, let the sequence of functions $(X_i)_{i\in \{0,\dots,N-1\}}$ be given and for all $i\in \{0,\dots,N-1\}$ assume that 
	$X_i \in W^{N+1,\infty}$ $([0,T])$, then one has the expansion~: 
	$$
	\begin{aligned}
		\cL_\e[X_i](t) 
		= &\sum_{k=1}^{N-i}\frac{\e^{k-1} (-1)^{k+1} }{k!} X_i^{(k)}(t) \left(\mu_k - \Rk(t)\right) 
		+ \ue \int_\tse^\infty \kernel(a) da X_i(t)
		\\
		&+ \e^{N-i} \cR_i^{N+1-i}, 
	\end{aligned}
	$$
	and the rest can be controlled :
	$$
	\left|\cR_i^{N+1-i}\right| \leq \frac{1}{(N+1-i)!}\nrm{X^{(N+1-i)}_i}{\infty} 
	{	\mu_{N+1-i}} 
	$$
\end{propm}
\begin{proof}
	One writes :
	$$
	\cL_\e[X_i](t) 
	= \ue \int_0^\tse (X_i(t)-X_i(t-\e a))\kernel(a) da + \ue \int_\tse^\infty \kernel(a) da~ X_i(t) 
	$$
	then using the Taylor expansion~:
	$$
	\begin{aligned}
		X_i(t-\e a) = & \sum_{k=0}^{N-i} \frac{\e^{k} a^k }{k!} (-1)^k X_i^{(k)}(t) \\
		&+ \frac{(-\e a)^{N+1-i}}{(N-i)!}\int_0^1 X_i^{(N+1-i)}( t- s \e a 
		)  (1-s)^{N-i} ds
	\end{aligned}
	$$
	so that the first term above becomes~:
	$$
	\begin{aligned}
		\ue \int_0^\tse&  \left(X_i(t)-X_i(t-\e a)\right) \kernel(a) da \\
		& = \sum_{k=1}^{N-i} \int_0^\tse \frac{\e^{k-1} a^k }{k!} \kernel(a)da (-1)^{k+1} X_i^{(k)}(t) +\e^{N-i}\cR_i^{N+1-i} \\
		&= \sum_{k=1}^{N-i} \frac{(-1)^{k+1}}{k!} X_i^{(k)}(t) \left( \e^{k-1}\mu_k - \ue \int_\tse^\infty (\e a)^{k} \kernel(a) da \right)+\e^{N-i}\cR_i^{N+1-i} 
	\end{aligned}
	$$
	which provides the result.
\end{proof}
\begin{propm}[Outer expansions]\label{def.outer.exp}
	
	Under the same hypothesis as above, the zeroth-order macroscopic limit is given by
	\begin{equation}\label{eq.outer.exp.zero}
		\mu_1 X_0^{(1)} = f,
	\end{equation}
	and at any order $\ell \in \{1,\dots,N\}$, we have~:
	\begin{equation}\label{eq.outer.correctors}
		{\mu_1 X'_{\ell-1} =  \sum_{k=2}^\ell   (-1)^{k} \frac{\mu_{k} }{k!}X^{(k)}_{\ell-k}. 
		}
	\end{equation}
\end{propm}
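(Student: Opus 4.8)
The plan is to feed the outer part of the ansatz \eqref{N.asymp.exp}, namely $X_{outer} = \sum_{i=0}^{N-1}\e^i X_i$, into the operator $\cL_\e$ and to require that it reproduce the right-hand side of \eqref{eq.tixen} up to order $\e^N$, reading off the corrector equations by collecting equal powers of $\e$. Since $\cL_\e$ is linear, I would first write $\cL_\e[X_{outer}] = \sum_{i=0}^{N-1}\e^i\,\cL_\e[X_i]$ and substitute the expansion of Proposition~\ref{prop.cle.xi} for each $\cL_\e[X_i]$. This splits every term into a \emph{bulk} contribution carrying the full moments $\mu_k$, a \emph{tail} contribution carrying the truncated moments $\Rk(t)=\rk(\tse)$ together with $\ue\int_{\tse}^\infty\kernel(a)\,da\,X_i(t)$, and a remainder $\e^{N-i}\cR_i^{N+1-i}$. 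The tail contributions are localized near $t=0$; together with the past-condition tail $\ue\int_{\tse}^\infty\Xp(t-\e a)\kernel(a)\,da$ appearing on the right of \eqref{eq.X.operator.form}, they are precisely what the inner correctors $Y_N,Z_N,W_N$ are designed to cancel, so the outer correctors are fixed solely by matching the bulk part against $f$.

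Next I would reindex the double sum. The bulk terms read $\sum_{i=0}^{N-1}\sum_{k=1}^{N-i}\frac{(-1)^{k+1}}{k!}\e^{i+k-1}\mu_k X_i^{(k)}(t)$; setting the total power $j=i+k-1$ and exchanging the order of summation, the coefficient of $\e^j$ for $j\in\{0,\dots,N-1\}$ becomes $\sum_{k=1}^{j+1}\frac{(-1)^{k+1}}{k!}\mu_k X_{j-k+1}^{(k)}(t)$. The collected remainder is $\e^N\sum_{i=0}^{N-1}\cR_i^{N+1-i}$, which is genuinely $O(\e^N)$ thanks to the bound on $\cR_i^{N+1-i}$ from Proposition~\ref{prop.cle.xi}, the assumed regularity $X_i\in W^{N+1,\infty}([0,T])$, and the finiteness $\mu_{N+1}<\infty$ from Hypotheses~\ref{hypo.data.cst}.

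Matching powers then yields the hierarchy. Requiring the $\e^0$-coefficient to equal $f$ leaves only the $k=1$ term $\mu_1 X_0^{(1)}=f$, which is \eqref{eq.outer.exp.zero}. For $j\geq1$ the matched right-hand side has no bulk part, so the $\e^j$-coefficient must vanish, i.e. $\mu_1 X_j' + \sum_{k=2}^{j+1}\frac{(-1)^{k+1}}{k!}\mu_k X_{j-k+1}^{(k)}=0$; isolating the $k=1$ term and renaming $\ell=j+1$ produces exactly $\mu_1 X'_{\ell-1}=\sum_{k=2}^\ell (-1)^k\frac{\mu_k}{k!}X^{(k)}_{\ell-k}$, which is \eqref{eq.outer.correctors}. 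Each line is a first-order linear ODE whose source depends only on the previously constructed lower-order correctors, and it is well-posed once an initial datum is prescribed because $\mu_1=\int_\rr a\,\kernel(a)\,da>0$, a consequence of the positivity assumption iii) of Hypotheses~\ref{hypo.data.cst}; the regularity then propagates down the hierarchy so that each $X_{\ell-1}\in W^{N+1,\infty}([0,T])$.

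The main obstacle is not the algebra but the clean separation of the tail terms from the bulk: one must justify that $\Rk(t)$ and $\ue\int_{\tse}^\infty\kernel\,da$ do not contribute to the outer balance. These quantities do not decay uniformly in $\e$ near $t=0$, so the argument must show that discarding them here is legitimate precisely because the inner expansion reabsorbs them, while away from the boundary layer they remain negligible at order $\e^N$. Once this bookkeeping is in place, the reindexing and the ODE well-posedness are routine.
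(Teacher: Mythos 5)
Your proposal is correct and follows essentially the same route as the paper: expand $\cL_\e$ applied to each $X_i$ via Proposition~\ref{prop.cle.xi}, reindex the double sum by the total power of $\e$ (your $j=i+k-1$ is the paper's $\ell-1$), assign every term containing $\Rk$ or $\ue\int_{\tse}^\infty\kernel\,da$ to the initial layer to be absorbed by $Y_N,Z_N,W_N$, and read off the nested ODEs by matching the remaining bulk coefficients against $f$ at order $\e^0$ and against zero at higher orders. Your additional remarks on well-posedness via $\mu_1>0$ and on the legitimacy of discarding the tail terms are sound supplements rather than departures from the paper's argument.
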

\begin{proof}
	The result proved in Proposition \ref{prop.cle.xi} leads to ~:
	\begin{equation}\label{eq.restes}
		\begin{aligned}
			\cL_\e\left[\sum_{i=0}^{N-1}\e^i X_i \right] = \sum_{i=0}^{N-1} \e^i &
			\sum_{k=1}^{N-i}\frac{\e^{k-1} (-1)^{k+1} }{k!} X_i^{(k)}(t) \left(\mu_k - \Rk(t)\right) \\&
			+ \sum_{i=0}^{N-1} \e^{i-1}\int_\tse^\infty \kernel(a) da~ X_i(t)
			+ S_{N,0},
		\end{aligned}
	\end{equation}
	where we set $S_{N,0}:=\e^{N} \sum_{i=0}^{N-1}\cR_i^{N+1-i}$ and 
	$$
	\left|S_{N,0}\right| \leq 
	{\e^{N}}\max\limits_{i\in\{0,\dots,N-1\}} \left\{\mu_{N+1-i} \nrm{X_i}{W^{N+1-i,\infty}(0,T)} \right\}.$$
	Considering the first sum gives~:
	$$
	\begin{aligned}
		\sum_{i=0}^{N-1} \e^i &
		\sum_{k=1}^{N-i}\frac{\e^{k-1} (-1)^{k+1} }{k!} X_i^{(k)}(t) \left(\mu_k - \Rk(t)\right) \\
		& =
		\sum_{k=1}^N \sum_{\ell = k}^N \e^{\ell-1}\frac{ (-1)^{k+1} }{k!} X_{\ell-k}^{(k)}(t) \left(\mu_k - \Rk(t)\right)\\
		&		=\sum_{\ell=1}^N \e^{\ell-1} \sum_{k=1}^\ell \frac{ (-1)^{k+1} }{k!} X_{\ell-k}^{(k)}(t) \left(\mu_k - \Rk(t)\right)
	\end{aligned}
	$$
	Separating powers of $\e$ and considering that terms containing functions $\Rk$ belong
	to the initial layer (these depend only on the microscopic variable $t/\e$) provides~: 
	\begin{equation}\label{eq.outer.correctors.general}
		\sum_{k=1}^\ell  \mu_{k} \frac{X^{(k)}_{\ell-k}(t)}{k!} (-1)^{k+1}  = 
		\begin{cases}
			0 & \text{if}\;  \ell \neq 1, \\
			f{(t)} & \text{otherwise,}
		\end{cases}
	\end{equation}
	and by relating the lowest derivative with the highest index to the rest of the correctors, we establish 
	macroscopic nested ODEs  \eqref{eq.outer.exp.zero} and \eqref{eq.outer.correctors}.
\end{proof}
\begin{remark}
	The initial conditions of the macroscopic correctors $X_i$ are to be defined later (cf Theorem \ref{thm.ci.macro}). 
\end{remark}
\begin{propm}[Inner expansion]\label{inner.exp.def}
	It is threefold.
	\begin{itemize}
		\item The first part accounts for terms containing $\Rk$ in the first sum of \eqref{eq.restes}~:
		\begin{equation}\label{eq.def.YN}
			Y_N (t) :=  \sum_{m=1}^N \e^m \sum_{q=1}^m \sum_{k=1}^q \frac{(-1)^{k+1}}{k!(m-q)!} X^{(k+m-q)}_{q-k}(0) \tix_{m-q,k}(t) 
		\end{equation}
		where $\tix_{j,k}{(t)}:=x_{j,k}(t/\e)$ and the microscopic correctors solve~:
		\begin{equation}\label{eq.correcteur.micro}
			\cL_1 [ x_{j,k} ] (t) =\rjk(t) := t^j \int_t^\infty a^k \kernel(a) da, 
		\end{equation}
		and $\cL_1$ {is $\cL_\e$ defined for $\e=1$.}
		\item The second part corrects the second sum in \eqref{eq.restes} and reads~:
		\begin{equation}\label{eq.def.ZN}
			Z_N(t):=
			-\sum_{m=1}^{N}  \e^{m}\sum_{q=0}^{m-1}  \frac{1}{(m-q)!} X^{(m-q)}_{q}(0)  \tix_{m-q,0}{(t)} - \sum_{i=0}^{N-1} \e^{i} X_{i}(0){\tix_{0,0}(t) }.
		\end{equation}
		{and we underline that by uniqueness of the solution of \eqref{eq.correcteur.micro}, $\tix_{0,0}(t)=1$.}
		\item The last part concerns the remainders related to the past source term in \eqref{eq.tixen}
		$$
		W_N (t) := \sum_{i=0}^{N} \frac{\e^i}{i!} X_p^{(i)}(0) \tiw_{i}(t),
		$$
		where $\tiw_i(t) := w_i(t/\e)$
		and $(w_\ell)_\ell$ solve for $\ell \in \NN$,
		\begin{equation}\label{eq.past.correctors}
			\left\{
			\begin{aligned}
				&\int_\rr (w_\ell(t)-w_\ell(t-a) ) \kernel(a) da = 0, &  t>0, \\
				&w_\ell(t) = t^\ell ,& t\leq 0.
			\end{aligned}
			\right.
		\end{equation}
	\end{itemize}
\end{propm}

\begin{proof}
	First, we begin by constructing the first part of the initial layer $Y_N$. We consider the second term in \eqref{eq.restes} and we use  Taylor's expansion~:
	$$ X_{\ell-k}^{(k)}(t)=\sum_{j=0}^{N-k} 
	X_{\ell -k }^{(k+j)} (0) \frac{t^j}{j!}+ \e^N \cI^{N}_{k,l},
	$$
	where the integral rest reads :
	$$\cI^{N}_{k,l}:=\Frac{\e^{-k+1}}{(N-k)!}\left( \tse\right)^{N-k+1}\int_{0}^{1}(1-s)^{N-k}X_{\ell-k}^{(N-k+1)}(st) ds,$$
	which implies that
	\begin{equation}\label{eq.corectionYN}
		\begin{aligned}
			\sum_{\ell=1}^N &\e^{\ell-1} \sum_{k=1}^\ell \frac{ (-1)^{k} }{k!} X_{\ell-k}^{(k)}(t)  \Rk(t)  \\
			& =\sum_{\ell=1}^N \e^{\ell-1} \sum_{k=1}^\ell \frac{ (-1)^{k} }{k!} \Rk(t) \left\{ \sum_{j=0}^{N-k} 
			X_{\ell -k }^{(k+j)} (0) \frac{t^j}{j!}+ {\e^{N}} \cI^{N}_{k,l}
			\right\} \\
			& =\sum_{\ell=1}^N  \sum_{k=1}^\ell \sum_{j=0}^{N-k}  
			\frac{ (-1)^{k} }{k!j!}  \e^{\ell+j-1}
			X_{\ell -k }^{(k+j)} (0)\left(\tse\right)^j  \Rk(t) + S_{N,1} =: I + S_{N,1} 
		\end{aligned}
	\end{equation}
	where
	\begin{equation}\label{SN1}
		S_{N,1} := \sum_{\ell=1}^N  \sum_{k=1}^\ell \e^{\ell+N-1} \frac{(-1)^k}{k!} \Rk(t) \cI^{N}_{k,l},
	\end{equation}
	{Since on $(t/\e,\infty)$, $a>t/\e$ and since $N-k+1 \geq 0$ :
		\begin{equation}\label{eq.est.tails}
			\left(\tse\right)^j  \Rk(t)  \equiv \left(\frac{t}{\e}\right)^{N-k+1} \int_\tse^\infty a^k \kernel(a) da \leq \int_\tse^\infty a^{N+1} \kernel(a) da \leq \mu_{N+1}
		\end{equation}
		one has that :
		\begin{equation}\label{est.S.N.1}
			\left|S_{N,1}\right| \leq \e^N  C \mu_{N+1} \sum_{\ell=1}^N  \sum_{k=1}^\ell \e^{\ell-k} \leq 
			C' \e^N \mu_{N+1}
		\end{equation}
		where the generic constants $C$ and $C'$ depend only on 
		$\max_{j \in \{0,\dots,N-1\}} \nrm{X_{j}}{W^{1,N}(0,T)} .
		$
	}
	The first triple sum in \eqref{eq.corectionYN} can be decomposed thanks to Proposition \ref{prop.sum.integers} as
	$$
	\begin{aligned}
		I:= & \sum_{m=1}^N \e^{m-1} \left(\sum_{q=1}^{m} \sum_{k=1}^{q}  \frac{(-1)^{k}}{k!(m-q)!} X^{(k+m-q)}_{q-k}(0)\Ru{m-q,k}(t) \right)\\
		& + \sum_{m=N+1}^{2N-1} \e^{m-1}\left( \sum_{q=m+1-N}^N  
		\sum_{k=1}^{q+N-m} \frac{(-1)^{k}}{k!(m-q)!}X^{(k+m-q)}_{q-k}(0)\Ru{m-q,k}(t)\right) \\
		& =: I_1 + O(\e^N)
	\end{aligned}
	$$
	{where we recall the definition of $\Ru{j, k}$ in \eqref{eq.def.Theta}}.
	In order to compensate $I_1$,  
	we define microscopic correctors $\tix_{j,k}$ as \eqref{eq.correcteur.micro} and set $Y_N$ 
	as in \eqref{eq.def.YN}.
	Now, we need to correct the third term in \eqref{eq.restes}, which we do with the same technique as above~:
	$$
	\begin{aligned}
		&		\sum_{i=0}^{N-1} \e^{i-1} X_i(t)   \int_\tse^\infty \kernel(a) da\\
		&= 
		\sum_{i=0}^{N-1} \e^{i-1} \left\{ \sum_{j=0}^{N-i} \frac{t^j}{j!}X^{(j)}_{i}(0)  + \frac{ t^{N+1-i}}{(N-i)!}\int_0^1 X^{N+1-i}_i(st) (1-s)^{N-i} ds \right\} \int_\tse^\infty \kernel(a) da\\
		& = \sum_{i=0}^{N-1}  \sum_{j=0}^{N-i} \frac{\e^{i+j-1}}{j!} X^{(j)}_{i}(0) 
		\left(\tse\right)^j
		\Xi_{0,0}(t) + S_{N,2} \\
		& 		= \sum_{i=0}^{N-1}  \sum_{j=0}^{N-i}  \frac{\e^{i+j-1}}{j!} \Ru{j,0}(t)  X^{(j)}_{i}(0)  + S_{N,2} \\
		& = \sum_{m=1}^{N}  \e^{m-1}\sum_{q=0}^{m-1}  \frac{1}{(m-q)!} \Ru{m-q,0}(t)  X^{(m-q)}_{q}(0)   + \sum_{i=0}^{N-1} \e^{i-1}\Ru{0,0}(t)  X_{i}(0)  + S_{N,2} 
	\end{aligned}
	$$
	where $\Ru{j,0}(t) := \ru{j,0}(t/\e)$ and 
	\begin{equation}\label{def.S.N.2}
		S_{N,2}(t) :=  \sum_{i=0}^{N-1} \e^{i-1} \frac{t^{N+1-i}}{N-i!}\int_0^1 X^{N+1-i}_i(st) (1-s)^{N-i} ds \int_\tse^\infty \kernel(a)da, 
	\end{equation}
	and one has using the same argument as in \eqref{eq.est.tails}~:
	\begin{equation}\label{est.S.N.2}
		\left| S_{N,2}\right| \leq C \e^{N} \int_\rr (1+a)^{N+1} \kernel(a) da \sup_{i\in \{0,\dots,N\}}\nrm{X_i}{W^{N+1-i,\infty}(0,t)}.
	\end{equation}
	It suffices then to add the correction $Z_N$ defined as in \eqref{eq.def.ZN}.
	
	\noindent {Lastly, the right hand side of \eqref{eq.X.operator.form} contains
		a multi-scale term related to the past positions $\xp$. As it depends on 
		the fast variable $\tse$, we  Taylor-expand} $X_p(t- \e a)$ around $0$~:
	$$
	\begin{aligned}
		X_p(t-\e a)&  = \sum_{i=0}^{N} \e^i X_p^{(i)} (0) \frac{(t/\e- a)^i}{i!} \\
		&+ \e^{N+1} \frac{(t/\e- a)^{N+1}}{N!}\int_0^1 X_p^{(N+1) } (s(t-\e a)) (1-s)^{N} ds,
	\end{aligned}
	$$
	{leading to re-write the integral term in the rhs of \eqref{eq.X.operator.form} as}
	$$ \ue \int_{\tse}^{+\infty}X_p(t-\e a)\kernel(a)da=\sum_{i=0}^{N} \e^{i-1} X_p^{(i)} (0) \int_{\tse}^{+\infty}\frac{(t/\e- a)^i}{i!} \kernel(a)da+ S_{N,3},$$
	where 
	\begin{equation}\label{def.S.N.3}
		S_{N,3}:=\e^{N} \int_{\tse}^{+\infty}\frac{(t/\e- a)^{N+1}}{N!}\int_0^1 X_p^{(N+1) } (s(t-\e a)) (1-s)^{N} ds \kernel(a)da,
	\end{equation}
	and 
	\begin{equation}\label{err.est.SN3}
		\begin{aligned}
			\left|S_{N,3}\right|&\leq \frac{\e^{N}}{(N+1)!}\|X^{(N+1)}_p\|_{\infty}\int_{\tse}^{+\infty}\left( \tse -a\right)^{N+1}\kernel(a)da {\leq C \e^{N} \mu_{N+1}}
		\end{aligned}
	\end{equation}
	{This shows why the past microscopic correctors should reads as in the last part of the claim.}
\end{proof}


\subsection{Asymptotic behavior of initial layers}

\begin{lemm}\label{lem.propr.xjk}
	Assume that the integers $j$ and $k$ are chosen s.t. $j+k\leq N$, moreover, 
	if $\mu_{j+k+2}<\infty$, then one has~:
	$$
	x_{j,k}(0) = \begin{cases}
		\frac{\mu_k}{\mu_0},& \text{if} \; j=0, \\
		0,& \text{otherwise},
	\end{cases}
	$$
	and $x_{j,k} (t) \to \mu_{j+1+k}/((j+1)\mu_1)$ when $t \to \infty$.
\end{lemm}
{The proof of this lemma strongly relies on the concept of resolvent introduced and 
	deeply studied in \cite[Chap. 2, 4, 7]{Gripenberg_ea}}.
\begin{proof}
	The resolvent associated to \eqref{eq.correcteur.micro} \cite[Theorem 2.3.1 and 2.3.5, p. 42-45]{Gripenberg_ea}, satisfies~:
	$$
	\reso(t)-(\reso\star k)(t) = k(t),
	$$
	where 
	\begin{equation}\label{def.kernel.cst}
		k(a) := \kernel(a)/\mu_0, \quad (\reso \star k )(t) := \int_0^t \reso (t-\tau) k(\tau) d\tau 
	\end{equation}
	and it can be decomposed  \cite[Theorem 7.4.1, p.201]{Gripenberg_ea} as 
	$$
	\reso(t) = \frac{\mu_0}{\mu_1} + \gamma(t), 
	$$
	where the function $\gamma \in L^1(\rr)$. Moreover, the resolvent being defined the
	solution $x_{j,k}$ is {explicit} and reads~:
	$$
	x_{j,k} =\rjk +\rjk \star \reso =\rjk +\rjk \star (\mu_0/\mu_1 + \gamma) =\rjk +\rjk \star \gamma 
	+ \frac{\mu_0}{\mu_1}\int_0^t\rjk(s) ds.
	$$
	Thus the leading term in $x_{j,k}$ when $t$ grows large is the last integral. Indeed  
	$$
	\frac{\mu_0}{\mu_1} \int_\rr t^j \int_t^\infty a^k k(a)da dt = \frac{1}{\mu_1}\int_\rr \left(\int_0^a t^j dt\right) a^k \kernel(a) da = \frac{1}{(j+1)} \frac{\mu_{j+k+1}}{\mu_1}
	$$
	and one has {provided that $\mu_{j+k+2}<\infty$}~:
	$$
	x_{j,k} - \frac{1}{(j+1)} \frac{\mu_{j+k+1}}{\mu_1} \in L^1(\rr).
	$$
	which we define as the formal expression 
	$x_{j,k} (t) \to \mu_{j+1+k}/((j+1)\mu_1)$ when $t \to \infty$.
\end{proof}
\begin{lemm}\label{lem.propr.wl}
	Under the same assumptions as in the previous Lemma, the microscopic functions $w_\ell$ are discontinuous at $t=0$, for all $\ell\geq 0$~:
	$$
	w_\ell(0^+) = (-1)^\ell \frac{\mu_\ell }{\mu_0}, \quad w_\ell (0^-) = 0,
	$$
	and 
	and $w_{\ell} (t) \to ((-1)^\ell\mu_{\ell+1})/((\ell+1)\mu_1)$ when $t \to \infty$, {provided that
		$\mu_{\ell+2}<\infty$}.
\end{lemm}
\begin{proof}
	Using \eqref{eq.past.correctors}, we can easily show the discontinuity of the correctors $w_\ell$ at $t=0$.
	By the same arguments as proof of Lemma \ref{lem.propr.xjk}, one has that~:
	$$
	\begin{aligned}
		\lim_{t\to \infty} w_\ell (t) & = \frac{1}{\mu_1} \int_\rr \int_t^\infty (t-a)^{\ell} \kernel(a) da dt  = \frac{1}{\mu_1}\int_\rr \left( \int_0^a (t-a)^\ell dt\right)  \kernel(a) da \\
		& = \frac{(-1)^{\ell}}{(\ell+1)} \frac{\mu_{\ell+1}}{\mu_1},
	\end{aligned}
	$$
	which ends the proof.
\end{proof}
\begin{lemm}
	Under the previous {hypotheses, the initial data can be estimated}
	$$
	\left|X_\e (0^+) - \tiXen{N} (0^+)\right| \lesssim \e^{N{+1}}.
	$$
\end{lemm}
\begin{proof}
	By definition, one has~:
	\begin{equation}\label{eq.sum.CI}
		\tiXen{N} (0^+) = \sum_{i=0}^{N-1} \e^i X_i(0) + Y_N(0) + Z_N(0) + W_N(0^+).
	\end{equation}
	{Next we analyze each of these terms. Firstly, }one has thanks to 
	$$
	\begin{aligned}
		Y_N (0) = 
		\frac{1}{{\mu_0} }\sum_{\ell=1}^N  \e^{\ell} \sum_{k=1}^\ell 
		\frac{ (-1)^{k+1} }{k!} 
		X_{\ell -k }^{(k)} (0){\mu_k} 
		& = \e \frac{f(0)}{\mu_0} 
	\end{aligned}
	$$
	where we used that $\tix_{j,k}(0)=0$ for all $j\neq0$ and \eqref{eq.outer.correctors.general}.
	By definition, 
	$$
	Z_N(0) = - \sum_{i=0}^{N-1} \e^{i} X_i(0) \tix_{0,0}(0) = - \sum_{i=0}^{N-1} \e^{i} X_i(0),
	$$
	and compensates the first terms of the sum \eqref{eq.sum.CI}, then 
	\begin{equation}\label{eq.W.zeo}
		W_N(0) = \frac{1}{\mu_0}\sum_{i=0}^{N} \e^i \frac{X_p^{(i)}(0)}{i!} (-1)^i \mu_i. 	
	\end{equation}
	Then one observes that {if $\xeps$ solves \eqref{eq.X.operator.form}, then when $t\downarrow 0^+$, }
	$$
	X_\e(0^+) = \e \frac{f(0)}{\mu_0} + \int_\rr X_p(-\e a) \frac{\kernel(a) }{\mu_0}da 
	$$
	{and so an $N^{\text{th}}$-order Taylor expansion of {$\xp(-\e a)$ in 0} inside the last term  coupled 
		with \eqref{eq.W.zeo} leaves an $O(\e^{N+1})$ rest.}
\end{proof}
\subsection{Matching inner and outer expansions}
So far the initial conditions of the outer expansion are not defined. For this sake, we write the inner expansion's limit
when $t\to \infty$. This gives~:
$$
\lim_{t\to \infty} Y_N(t) = {\sum_{m=1}^N \e^{m} }
\sum_{q=1}^m \sum_{k=1}^{q}  \frac{(-1)^{k+1}}{(m-q+1)!k!} X^{(k+m-q)}_{q-k} (0)\frac{ \mu_{m-q+k+1} }{{\mu_1}}
=:s_1,
$$
together with~:
$$
\lim_{t\to \infty} Z_N(t) = 
-
\sum_{m=1}^{N} \e^{m} \sum_{q=0}^{m-1}  X^{(m-q)}_{q} \frac{\mu_{m-q+1}}{(m-q+1)!\mu_1} - \sum_{i=0}^{N-1}\e^{i} X_i(0) 
$$
and 
$$
\lim_{t\to \infty} W_N(t) = \sum_{i=0}^{N} \e^i\frac{X_p^{(i)}(0)}{(i+1)!} (-1)^i \frac{\mu_{i+1}}{\mu_1}=:s_3.
$$
As we do not want the inner expansion to interfere with the outer expansion, we
gather powers of $\e$ and define the initial conditions of the outer expansion so that
$$
\lim_{t\to \infty} \left(Y_N(t) + Z_N(t) + W_N(t)\right) = 0
$$
which then gives~:
\begin{thmm}\label{thm.ci.macro}
	The macroscopic Ansatz should be given the initial conditions~: 
	for $m=0$, $X_0(0) = X_p(0)$ while 
	for $m\in \{1,\dots,N-1\}$
	$$
	\begin{aligned}
		\mu_{1} X_{m}(0) &= (-1)^{m} X_p^{(m)}(0) \frac{\mu_{m+1}}{((m+1)!)} - \sum_{q=0}^{m-1} X^{(m-q)}_{q}(0) \frac{\mu_{m-q+1}}{(m-q+1)!} \\
		& + \sum_{q=1}^m \sum_{k=1}^{q}  \frac{(-1)^{k+1}}{(m-q+1)!k!} X^{(k+m-q)}_{q-k} (0) \mu_{m-q+k+1}.
	\end{aligned}
	$$
\end{thmm}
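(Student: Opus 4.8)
The plan is to obtain the stated initial conditions as the explicit solution, order by order in $\e$, of the matching requirement $\lim_{t\to\infty}\left(Y_N(t)+Z_N(t)+W_N(t)\right)=0$. First I would pass to the limit $t\to\infty$ inside each of the three inner blocks \eqref{eq.def.YN}, \eqref{eq.def.ZN} and the definition of $W_N$. Since every block is a finite linear combination of the rescaled correctors $\tix_{j,k}$ and $\tiw_i$ with coefficients independent of $t$, the limit is taken termwise using Lemmas \ref{lem.propr.xjk} and \ref{lem.propr.wl}, namely $\tix_{j,k}(t)\to \mu_{j+1+k}/((j+1)\mu_1)$ and $\tiw_i(t)\to (-1)^i\mu_{i+1}/((i+1)\mu_1)$. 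This yields the three explicit polynomials $s_1$, $\lim_{t\to\infty}Z_N$ and $s_3$ in $\e$ displayed just above the statement, whose coefficients are built solely from the moments $\mu_j$ and from the values $X_q^{(k)}(0)$ and $X_p^{(i)}(0)$; the common factor $1/\mu_1$ produced by the two lemmas is what will be cleared to give the $\mu_1 X_m(0)$ on the left-hand side.

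Next I would impose that the inner part does not pollute the outer one, i.e. that $s_1+\lim_{t\to\infty}Z_N+s_3$ vanishes as an identity in $\e$, and require the coefficient of $\e^m$ to vanish for each $m\in\{0,\dots,N-1\}$ (the surviving higher powers, up to $\e^{2N-1}$ in the limit of $Y_N$, being harmless as they are absorbed into the $O(\e^N)$ remainder). This produces one scalar equation per order $m$.

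The key structural observation, and the only point needing care, is the triangular nature of this system. At order $\e^m$, the contribution from $s_1$ involves only correctors $X_{q-k}$ with $q-k\leq m-1$, the first sum of $\lim_{t\to\infty}Z_N$ involves only $X_q$ with $q\leq m-1$, and $s_3$ involves only the data $X_p^{(m)}(0)$; the unique term carrying the maximal index $m$ is the summand $-X_m(0)$ isolated from the second sum $-\sum_i\e^i X_i(0)$ of $\lim_{t\to\infty}Z_N$. Solving the order-$\e^m$ equation for this isolated term thus expresses $\mu_1 X_m(0)$ through already-determined lower-order quantities, giving exactly the recursion in the statement: for $m=0$ all sums are empty and the equation collapses to $-X_0(0)+X_p(0)=0$, hence $X_0(0)=X_p(0)$, while for $m\geq 1$ a direct rewriting, merging factorials via $(m-q+1)!=(m-q+1)(m-q)!$, regroups the remaining terms into the three displayed blocks (the $X_p$ term, the $Z_N$ sum, and the $Y_N$ double sum). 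I expect the main obstacle to be purely combinatorial: the consistent reindexing of the double sum coming from $s_1$ so that its generic coefficient is attached to the correct power $\e^m$, which is precisely where Proposition \ref{prop.sum.integers} already intervened in the construction of $Y_N$.
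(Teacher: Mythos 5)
Your proposal is correct and takes essentially the same route as the paper: termwise limits of $Y_N$, $Z_N$, $W_N$ via Lemmas \ref{lem.propr.xjk} and \ref{lem.propr.wl}, imposing $\lim_{t\to\infty}(Y_N+Z_N+W_N)=0$ as an identity in $\e$, and solving the resulting triangular system order by order, the isolated term $-X_m(0)$ coming from the second sum of $Z_N$ being exactly what produces $\mu_1 X_m(0)$ on the left-hand side after clearing the common factor $1/\mu_1$. Your only slip is immaterial: $Y_N$ as defined in \eqref{eq.def.YN} carries powers of $\e$ only up to $\e^N$ (the powers $\e^{N+1},\dots,\e^{2N-1}$ were already discarded into the $O(\e^N)$ remainder in Proposition \ref{inner.exp.def}), so the ``harmless surviving powers'' in the matching identity are just those of order $\e^N$.
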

\subsection{Error estimates}
In this section, we give error estimates between $X_\e$, the  solution of \eqref{eq.X.operator.form} 
and the asymptotic expansion $\tiXen{N}$ given by \eqref{N.asymp.exp}. {This result is based 
	on a comparison principle \cite[Chap. 9, Section 8]{Gripenberg_ea} and the construction of 
	a super solution $U_N$ such that $|X_\e(t)-\tiXen{N}(t)|\leq U_N(t)$ for all $t \in [0,T]$. 
	Moreover, the super solution should satisfy $U_N\lesssim \e^N$ as well. 
	In order to use such a principle,  a strict control of the kernel 
	in some norm is expected. The following lemma is thus required~:}
\begin{lemm}\label{kernel}
	Under the Assumptions \ref{hypo.data.cst}, setting   
	$K_{\e}(t,s)$ $:=\frac{1}{\e \mu_{0}}$$ \kernel\left(\frac{s}{\e}\right)$ satisfies : 
	$$
	\vertiii{K_\e}_{B^{\infty}(0,T)}:=\esssup_{t \in (0,T)}\int_{0}^{t}\left|K_{\e}(t,s) \right|ds<1 .$$
\end{lemm}
{We refer the reader to \cite[Chap. 9, Sections 1 to 5]{Gripenberg_ea} for a complete overview of non-convolution integral Volterra equations
	and the corresponding functional framework.}
\begin{proof}For almost every $t\in (0,T)$,
	\begin{equation}\label{eq.int.K}
		0\leq \int_{0}^{t}\left|K_{\e}(\tilde{a},t) \right|~d\tilde{a}=
		\frac{\int_{0}^{\frac{t}{\e}}\kernel(a)~da}{\int_\rr\kernel(a)~da}\leq \frac{\int_{0}^{\frac{T}{\e}}\kernel(a)~da}{\int_\rr\kernel(a)~da}.
	\end{equation}
	But, by definition, for every fixed $\e$ there exists a compact set ${M_\e} \subset (T/\e,\infty)$ such that $\kernel(a) >0$ for 
	almost every $a \in M_\e$ so that
	$$
	\int_\rr \kernel(a)da - \int_{0}^{\frac{T}{\e}}\kernel(a)~da = \int_{\frac{T}{\e}}^\infty \kernel(a) da \geq \int_{{M_\e}} \kernel(a) da >0.
	$$
	which ends the proof.
\end{proof}

\begin{thmm}\label{thm.main.first.part}
	Suppose that Assumptions \ref{hypo.data.cst} holds, then~:
	$$
	\nrm{X_\e- {\ov{X}_{\e,N}}}{C([0,T])} \lesssim \e^N.
	$$
	where ${\ov{X}_{\e,N}}$ is defined in \eqref{N.asymp.exp} and $X_\e$ solves \eqref{eq.X.operator.form}.
\end{thmm}

\begin{proof}
	First, we consider the zero order approximation ({\em i.e. $N=1$}). We denote $\hX{1} := \Xeps - {\ov{X}_{\e,1}}$, it solves~:
	$$
	\cL_\e [\hX{1}] = S_1 := \sum_{i=1}^{3} S_{1,i},
	$$
	where $\left|S_1\right| \leq \e {C}_1$ (see  estimates \eqref{est.S.N.1}, \eqref{est.S.N.2} and \eqref{err.est.SN3} for $N=1$). We construct a super-solution $U_1$ such that 
	$$
	\cL_\e\left[\left|\hX{1}\right|\right]\leq \cL_\e[U_1] ,\quad \text{and}\; \left|\hX{1}(0)\right| \leq U_1(0).
	$$
	We set 
	$$
	U_1(t) := \e \left( c_1 + t c_2 -  \e c_3 \tiw_{1}(t)\right),
	$$
	where $\tiw_{1}$ solves \eqref{eq.past.correctors} with $\ell=1$.
	As the resolvent associated to \eqref{eq.past.correctors} is non-negative, 
	applying the comparison principle \cite[Propostion 9.8.1 and Lemma 9.8.2, p. 257]{Gripenberg_ea}, 
	shows that $\tiw_{1}(t) \leq 0$ for $t>0$. Then 
\begin{equation}\label{eq.comp}
\begin{aligned}
&		\cL_\e [U_1] =  c_1 \Rz(t) + \e c_2 \mu_1 + c_2 \int_\tse^\infty (t-\e a) \kernel(a) da -   \e c_3\cL_\e[\tiw_{1}] \\
& 		= c_1 \Rz(t) + \e c_2 \mu_1  + \e c_2 \int_\tse^\infty  \left(\tse - a\right) \kernel(a) da - \e c_3  \int_\tse^\infty\left(\tse - a\right) \kernel(a) da \geq \e c_2 \mu_1. 
\end{aligned} 	
\end{equation}
	The last inequality being true when $c_1 \geq 0$ and $c_2=c_3$. Then, one tunes $c_2 \geq  {C}_1/\mu_1$ so that
	$$
	\cL_\e\left[ \left| \hX{1} \right|\right] \leq \left|S_1\right| \leq \e K_1 \leq \e \mu_1 c_2 \leq \cL_\e[U_1],
	$$
	and the constant $c_1$ is chosen such that $\left|\hX{1}(0)\right| \leq \e c_1 \leq \e c_1 + \e^2 c_3 \frac{\mu_1}{\mu_0}= U_1(0)$. {The result then follows using \cite[Proposition 9.8.1 and Lemma 9.8.2]{Gripenberg_ea}.}

	More generally,  for any $N$, one sets $U_N := \e^N (c_1 + c_2 t - \e c_3 \tiw_{1})$ and the result follows the same~: choosing $c_1 := \left| \hX{N}(0) \right|$, $c_2 := K_N/\mu_1$ where $\left|S_N\right|\leq \e^N {C}_N$ and $c_3 =c_2$.
\end{proof}
{\begin{rmkm}\label{rmk.super.sol}
	We underline the novelty in the construction 
	of the super-solution $U$  when compared with \cite[Theorem 1.1]{MiOel.1}.
	Indeed, in this reference, terms
	$c_1 \Rz(t)$ and $ c_2 \int_\tse^\infty (t-\e a) \kernel(a) da$  in \eqref{eq.comp} 
	were compared so that the difference remain positive, and this 
	translates into the control of the ratio $A_\e [\rhoe] (t)$ 
	presented in the introduction.
	
	Here, instead, the negative tail is taken into account 
	by the initial layer $-\e c_3 \tiw_1$ and there is no need 
	to relate the previous terms anymore. This in turn allows
	to relax hypotheses on the decay of $\kernel$.
\end{rmkm}}

{\section{Conclusion \& perspectives}
	
	In this work we have obtain two-fold results. On one hand, we have weakened hypotheses
	on the death rate of the linkages and showed that still when the parameter $\e$ is small
	adhesive memory effects are close to friction. 
	
	Moreover, we made the previous statement more quantitative with respect to 
	the size of the parameter $\e$ thanks to a general asymptotic expansion
	when $\kernel$ is constant in time.
	
	To our knowledge, it is not possible yet to combine both parts of this article and 
	construct  asymtotic expansions (and error estimates) at any order $N\geq 2$, for the weakly coupled problem $(\rhoe, \Xeps)$
	solving \eqref{eq.rho.complet.bis} and \eqref{eq.X.eps}. The main reason for this
is that at some point we use arguments based on Laplace transform of the resolvent (indirectly 
this leads to the asymptotic decomposition of the resolvent used in Lemmas \ref{lem.propr.xjk} and \ref{lem.propr.wl} thanks to \cite[Theorem 7.4.1]{Gripenberg_ea}). When the density is time 
dependent, the microscopic correctors $x_{j,k}(s,t)$ an $w_\ell(s,t)$ 
depend on a macroscopic parameter $t$, ($s$ bien the local microscopic variable)
(this is usual in the framework of quasi-periodic asymptotic expansions \cite{CioDo})
and so do the corresponding resolvents $r_{x_{j,k}}(s,t)$ and $r_{w_\ell}(s,t)$.  Results provided by \cite[Theorem 7.4.1 p. 201]{Gripenberg_ea}
do not allow for uniform estimates of the initial layers with respect to the macroscopic 
parameter $t$ and the analysis performed in Section \ref{section.3} can't be extended to 
time dependent kernels. 
This seems an interesting mathematical problem  {\em per se}, since for the time 
being, asymptotic results established thanks to the Laplace transform, are
to our knowledge,  the sharpest possible under the weakest hypotheses. 
}

\appendix

\section{Some summations over integers}

\begin{figure}[ht!]
	\centering
	\begin{tikzpicture}[thick,scale=0.95, every node/.style={scale=0.95}]
		\draw [-stealth,very thick](-1,0) -- (5.25,0);
		\node at (5.5,0) (j) {$j$};
		\draw [-stealth,very thick](0,-1) -- (0,5.25);
		\node at (0,5.5) (i) {$i$};
		\node[anchor=east] at (0,4) (N) {$N$} ;
		\node[anchor=south west,rotate = -45,color=red] at (0,0) (mj1) {$m=1$};
		\node[anchor=south west,rotate = -45,color=red] at (2,2) (mjN) {$m=N$};		
		\draw (-1pt,4) -- (1pt,4) ;
		\node[anchor=south east] at (0,0) (i1) {1} ;
		\draw (-1pt,0) -- (1pt,0) ;
		\node[anchor=north] at (4,0) (nm1) {$N-1$} ;
		\node[anchor=north west] at (0,0) (z) {0};
		\draw [-stealth,color=red] (-0.5,-0.5) -- (5,5) ;
		\node[
		color=red] at (5.75,4.75) (m) {$m=i+j$};
		\draw [-stealth,color=blue] (4.5,-0.5) -- (-0.5,4.5) ;
		\node[
		color=blue] at (-1.25,4.75) (n) {$n=i-j+1$};
		\draw[-stealth,color=green] (-0.5,4)--(5.5,4);
		\node[anchor=west,color=green
		] at (5.5,4) (iegalN) {$n=2N+1-m$};
		\draw[-stealth,color=green] (4,-0.5)--(4,5.5);
		\node[anchor=north,
		color=green,
		] at (4,5.85) (jegalNm1) {$n=m-2N+3$};
		\foreach \x in {0,1,2,3,4,5}
		\foreach \y in {0,1,2,3,4,5}
		{\draw (\x-0.05,\y) -- (\x+0.05,\y);
			\draw (\x,\y-0.05) -- (\x,\y+0.05);
		}
	\end{tikzpicture}
	\caption{The index change from $(i,j)$ to $(m,n)$ (here as an example $N=5$). {It allows to separate the $\e$-scales as powers of $m$. We look for a peculiar way of   going through integer values of $(i,j)$  (crossed dots in the figure) although 
			this does not correspond to all integer values of $(m,n)$.
	 }}
	\label{fig:indexchange}
\end{figure}

\begin{propm}\label{prop.sum.integers}
	Assume that $N\geq 2$, and define the sum $S$ as follows 
	$$
	S := \sum_{i=1}^N \sum_{k=1}^i \sum_{j=0}^{N-k} a_{i,j,k}
	$$
	where $a_{i,j,k}$ is a sequence of real numbers,
	then this sum is in fact equal to
	\begin{equation}\label{eq.S.claim}
			S = \left(\sum_{m=1}^N \sum_{q=1}^{m} \sum_{k=1}^{q}  + \sum_{m=N+1}^{2N-1} \sum_{q=m+1-N}^N
		\sum_{k=1}^{q+N-m} \right)a_{q,m-q,k} 
	\end{equation}
\end{propm}

\begin{proof}
	{We distinguish between  $i<N$ and $i=N$. }
	In the first case, one has, by the following computation,
		that
		$$
		s:= \sum_{i=1}^{N-1} \sum_{k=1}^i \sum_{j=0}^{N-k} a_{i,j,k}  =  \sum_{i=1}^{N-1} \sum_{j=0}^{N-i-1} \sum_{k=1}^i  a_{i,j,k}+ 
		\sum_{i=1}^{N-1} \sum_{j=N-i}^{N-1} \sum_{k=1}^{N-j} a_{i,j,k},
		$$
		because
		$$
		\left\{
		\begin{aligned}
			&	0\leq j \leq N-i-1, \quad \min(i,N-j) = i \\
			& N-i \leq j \leq N-1, \quad \min(i,N-j) = N-j
		\end{aligned}
		\right.
		$$
		One has that 
		$$
		s{ =  \sum_{i=1}^{N-1} \left(\sum_{j=0}^{N-i-1} +
		\sum_{j=N-i}^{N-1} \right) \sum_{k=1}^{\min(i,N-j)} a_{i,j,k} }
		=\sum_{i=1}^{N-1} \sum_{j=0}^{N-1} \sum_{k=1}^{\min(i,N-j)} a_{i,j,k}.
		$$ 
		On the other hand, when $i=N$,  a simple check shows that
		$$
		s':=\sum_{k=1}^N \sum_{j=0}^{N-k} a_{N,j,k} = \sum_{j=0}^{N-1} \sum_{k=1}^{N-j} a_{N,j,k},
		$$
		and then one remarks simply that $\min(i=N,N-j)=N-j$ which gathering the terms gives 
		that 
		$$
		S = s+s' = \sum_{i=1}^N \sum_{j=0}^{N-1} \sum_{k=1}^{\min(i,N-j)} a_{i,j,k}.
		$$

		The previous sum can be rewritten as~:
		\begin{equation}\label{eq.S}
			\begin{aligned}
				S = & \left(\sum_{m=1}^{N} \sum_{n=3-m}^{m+1} 
				+ \sum_{m=N+1}^{2N-1} \sum_{n=m-2N+3}^{2N-m+1}\right)  \\
				& \sum_{k=1}^{\min(i(m,n),N-j(m,n))}
				a_{i(m,n),j(m,n),k} \chiu{\{i(m,n),j(m,n)\in\N^2\}}(m,n),
			\end{aligned}
		\end{equation}
		where  $m:=i+j$ and $n:=i-j+1$ 
		and the inverse transform should provide integer values
		$i(m,n):=(m+n-1)/2$ and $j(m,n):=(m-n+1)/2$ (see Fig. \ref{fig:indexchange}).
	{	This justifies the definition of the indicator function 
		$$
		\chiu{\{i(m,n),j(m,n)\in\N^2\}}(m,n) := \begin{cases}
			1, & \text{ if } (i(m,n),j(m,n))\in\N^2, \\
			0, & \text{ otherwise. }
		\end{cases}
		$$
		}
		When $m>N$, one needs to bound the summation
		on $n$ in an interval depending on $m$ (see Fig. \ref{fig:indexchange}). Indeed, 
		when $i=N$, we write~:
		$$
		n=N-j+1,\quad m=N+j,\quad \Rightarrow n=2N -m +1,
		$$
		while if $j=N-1$, 
		$$
		n=i-N+2,\quad m=i+N-1,\quad \Rightarrow n=m-2N +3,
		$$
		a simple check shows that $n\leq 2N-1 \Leftrightarrow m-2N+3 \leq 2N-m+1$.
		
		Then since the {indicator function $\chiu{\{(i(m,n),j(m,n))\in \NN^2\}}$ in \eqref{eq.S} is not zero} when $[n+m-1]_2=0$, there
		exists $q \in \Z$ such that 
		$n+m-1 = 2 q$ 
		or equivalently $n=1+2q-m$
		so that the summation with respect to $n$ can be {replaced} with a summation over $q$.
		When 
		$n \in \{3-m,m+1\}$, $q \in \{1,m\}$, and similarly when $n \in \{m-2N+3,2N-m+1\}$, 
		$q \in \{m+1-N,N\}$. Moreover 
		$q=i$ and $j=m-q$.
		Thus the previous sum becomes~:
		$$
		S =\left(\sum_{m=1}^N   \sum_{q=1}^m + \sum_{m=N+1}^{2N-1}\sum_{q=m+1-N}^N \right)\sum_{k=1}^{\min(q,N-(m-q))} a_{q,m-q,k}
		$$
		Since $\min(q,q+N-m) = q + \min(0,N-m) = q$ as soon as $m\leq N$ {this gives the first part of the sum in \eqref{eq.S.claim}}.
		If $m \in \{N+1,\dots,2N-1\}$, then $N-m \leq  -1$ and $\min(q,q+N-m) = q+N-m$ which ends the proof.
\end{proof}

\begin{propm}\label{prop.sum.integers.Z}
	In the same way as above
	$$
	S':= \sum_{i=0}^{N-1} \sum_{j=1}^{N-i} a_{i,j} = \sum_{m=1}^N \sum_{q=0}^{m-1} a_{q,m-q}
	$$
\end{propm}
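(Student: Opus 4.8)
The plan is to prove the identity by a single change of summation index, viewing both sides as sums of $a_{i,j}$ over lattice points in one and the same triangular region of $\N\times\N$.

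First I would make the index set on the left explicit. For $i\in\{0,\dots,N-1\}$ and $j\in\{1,\dots,N-i\}$ the constraints read $i\ge 0$, $j\ge 1$ and $i+j\le N$, the upper bound $i\le N-1$ being a consequence of $j\ge 1$ and $i+j\le N$. Hence
$$
S' = \sum_{(i,j)\in D} a_{i,j}, \qquad D := \{(i,j)\in\Z^2 : i\ge 0,\ j\ge 1,\ i+j\le N\}.
$$

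Next I would introduce the anti-diagonal change of variables $(i,j)\mapsto(q,m):=(i,\,i+j)$, whose inverse is $(q,m)\mapsto(i,j)=(q,\,m-q)$; this is an affine bijection of $\Z^2$ onto itself, so it suffices to track how it transforms the inequalities defining $D$. Under it, $i\ge 0$ becomes $q\ge 0$, the condition $j\ge 1$ becomes $m-q\ge 1$, i.e. $q\le m-1$, and $i+j\le N$ becomes $m\le N$; combining $q\ge 0$ with $q\le m-1$ forces $m\ge 1$. Therefore the image of $D$ is exactly
$$
D' := \{(q,m)\in\Z^2 : 1\le m\le N,\ 0\le q\le m-1\},
$$
and the summand transforms as $a_{i,j}=a_{q,m-q}$.

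Finally I would conclude: since the map is a bijection from $D$ onto $D'$ and the summand is invariant under it, reordering the (finite) sum gives
$$
S' = \sum_{(q,m)\in D'} a_{q,m-q} = \sum_{m=1}^N \sum_{q=0}^{m-1} a_{q,m-q},
$$
which is the claim. The computation carries no real obstacle: unlike Proposition~\ref{prop.sum.integers}, there is no third index $k$, and because $j\ge 1$ confines every point to the single triangle $i+j\le N$, the anti-diagonal $m=i+j$ never exceeds $N$, so no case split into $m\le N$ and $m>N$ is required. The only point needing a moment's care is checking that the boundary inequalities match up exactly, so that no lattice point is gained or lost at the edges of the triangle.
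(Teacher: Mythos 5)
Your proof is correct and follows essentially the same route as the paper, which performs exactly the change of variables $m=i+j$, $q=i$ and refers back to Proposition~\ref{prop.sum.integers} for the mechanics. You have simply spelled out the bijection and the boundary inequalities that the paper leaves implicit.
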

\begin{proof}
	Again we perform the change of variables $m=i+j$ and $q=i$ and we proceed as above.
\end{proof}

\section{Proof of Proposition \ref{prop.moments}} \label{sec.proof}
\begin{proof}
	First, for $j=0$, $\rhoz$ is explicitly given by 
	$$
	\rhoz(a,t) = \rhoz(0,t) \exp\left( - \int_0^a \zeta(\tia,t)d\tia\right) \leq \rhoz(0,t) \frac{m(a)}{m(0)} \leq \frac{\bmax \ztmax }{\ztmax+\bmin} \frac{m(a)}{m(0)}
	$$
	which gives $c_0$.
	Similarly, $ \dt \rhoz(a,t)$ it is explicit and reads~:
	$$
	\begin{aligned}
		\dt \rhoz(a,t) = \dt & \rhoz(0,t)\exp\left(-\int_0^a \zeta(\tia,t) d\tia\right) \\
		& - \int_0^a \exp\left( - \int_\tau^a \zeta(\tia,t) d \tia \right) \dt \zeta(\tau,t) \rhoz(\tau,t)d\tau, 
	\end{aligned}
	$$
	where
	$$ \dt \rhoz(0,t)=\frac{g(t)}{1+\beta(t)\int_0^{+\infty} \exp\left( - \int_\tau^a \zeta(\tia,t) d \tia \right)da}$$
	and 
	\begin{equation*}
		\begin{aligned}
			g(t)&= \beta'(t)\left( 1-\mu_0(t)\right)\int_0^{+\infty} \exp\left( - \int_\tau^a \zeta(\tia,t) d \tia \right)da\\
			&-\int_0^{+\infty}\int_0^a \exp\left( - \int_\tau^a \zeta(\tia,t) d \tia \right) \dt \zeta(\tau,t) \rhoz(\tau,t)d\tau da.
		\end{aligned}
	\end{equation*}
	So that
	$$
	\begin{aligned}
		\left|	\dt \rhoz(a,t)  \right| & \leq\left|\dt \rhoz(0,t)\right| \frac{m(a)}{m(0)} + m(a) \nrm{\zeta}{W^{1,\infty}}
		\int_0^a \frac{\rhoz(\tau,t)}{m(\tau)} d\tau \leq (k_1 + k_2 a )m(a) \\
		& \leq c'(1+a) m(a)
	\end{aligned}
	$$
	where 
	$$ k_1:=C\left( \Frac{\zeta_{\max}}{\beta_{\max}+\zeta_{\max}}, \|\beta\|_{W^{1,\infty}},\|m\|_{L^1(\rr)},\|\zeta\|_{W^{1,\infty}} \right), \hspace{0.2cm} k_2=c_0\Frac{\|\zeta\|_{W^{1,\infty}}}{m(0)}$$ 	
	and $ c_{0,1}:=\max(k_1,k_2)$.
	Now, for $j=1$, $\rhou$ can be given explicitly by
	$$
	\rhou (a,t) = \rhou (0,t) \exp\left(-\int_0^a \zeta(\tia,t) d\tia\right) - \int_0^a \exp\left( - \int_\tau^a \zeta(\tia,t) d \tia \right) \dt \rhoz(\tau,t)d\tau ,
	$$
	where
	$$ \rhou(0,t)=\frac{h(t)}{1+\beta(t)\int_0^{+\infty} \exp\left( - \int_\tau^a \zeta(\tia,t) d \tia \right)da}$$
	such that
	\begin{equation*}
		\begin{aligned}
			\left| h(t)\right|&=\left| \int_{\rr}\int_0^a \exp\left( - \int_\tau^a \zeta(\tia,t) d \tia \right) \dt \rhoz(\tau,t)d\tau da \right|\\
			&\leq \int_{\rr}\int_0^a \Frac{m(a)}{m(\tau)} \left| \dt \rhoz(\tau,t)\right|d\tau da\leq c_{0,1}\int_{\rr} (1+a)^2 m(a)da,
		\end{aligned}
	\end{equation*}
	and finally, we obtain that
	\begin{equation*}
		\begin{aligned}
			\left| \rhou(a,t)\right|&\leq k'_1 \frac{m(a)}{m(0)}+\int_{0}^{a}c_{0,1}(1+\tau)m(a)d\tau
			\leq \max(k'_1,c_{0,1})(1+a)^2m(a)
		\end{aligned}
	\end{equation*}
	where
	$$ k'_1:=C\left( \Frac{\zeta_{\max}}{\beta_{\max}+\zeta_{\max}}, \int_{\rr} (1+a)^2 m(a)da \right).$$
	Similarly, we can prove that
	$$ \left| \dt\rhou\right|\leq c_{1,1}(1+a)^3m(a),$$
	and the generic way can be deduced by induction.
\end{proof}
\section*{Acknowledgments}
This research was partly funded by Cofund Math in Paris, co-funded by the European Commission under the Horizon2020 research and innovation programme, Marie Sklodowska-Curie grant agreement No 101034255.

\section*{Conflict of interest}

The authors declare there is no conflict of interest.

\bibliographystyle{abbrv}
\bibliography{biblioVM}

\def\cprime{$'$} \def\cprime{$'$}
\begin{thebibliography}{10}

\bibitem{allouch2021friction}
S.~Allouch and V.~Mili{\v{s}}i{\'c}.
\newblock Friction mediated by transient elastic linkages: extension to loads
  of bounded variation.
\newblock {\em Journal of Integral Equations and Applications}, 2021.

\bibitem{Jonel}
J.~Appell, J.~Banas, and N.~J.~M. D{\'\i}az.
\newblock Bounded variation and around.
\newblock In {\em Bounded Variation and Around}. de Gruyter, 2013.

\bibitem{CioDo}
D.~Cioranescu and P.~Donato.
\newblock {\em An introduction to homogenization}, volume~17 of {\em Oxf. Lect.
  Ser. Math. Appl.}
\newblock Oxford: Oxford University Press, 1999.

\bibitem{Gripenberg_ea}
G.~Gripenberg, S.-O. Londen, and O.~Staffans.
\newblock {\em Volterra integral and functional equations}, volume~34 of {\em
  Encyclopedia of Mathematics and its Applications}.
\newblock Cambridge University Press, Cambridge, 1990.

\bibitem{MR3385931}
A.~Manhart, D.~Oelz, C.~Schmeiser, and N.~Sfakianakis.
\newblock An extended {F}ilament {B}ased {L}amellipodium {M}odel produces
  various moving cell shapes in the presence of chemotactic signals.
\newblock {\em J. Theoret. Biol.}, 382:244--258, 2015.

\bibitem{MiOel.1}
V.~Mili{\v{s}}i{\'c} and D.~Oelz.
\newblock On the asymptotic regime of a model for friction mediated by
  transient elastic linkages.
\newblock {\em J. Math. Pures Appl. (9)}, 96(5):484--501, 2011.

\bibitem{MiOel.2}
V.~Mili{\v{s}}i{\'c} and D.~Oelz.
\newblock On a structured model for the load dependent reaction kinetics of
  transient elastic linkages.
\newblock {\em SIAM J. Math. Anal.}, 47(3):2104--2121, 2015.

\bibitem{MiOel.3}
V.~Mili{\v{s}}i{\'c} and D.~Oelz.
\newblock Tear-off versus global existence for a structured model of adhesion
  mediated by transient elastic linkages.
\newblock {\em Commun. Math. Sci.}, 14(5):1353--1372, 2016.

\bibitem{MiOel.4}
V.~Mili{\v{s}}i{\'c} and D.~Oelz.
\newblock Space dependent adhesion forces mediated by transient elastic
  linkages: new convergence and global existence results.
\newblock {\em J. Differ. Equations}, 265(12):6049--6082, 2018.

\bibitem{milisic-shmeiser}
V.~Mili{\v{s}}i{\'c} and C.~Schmeiser.
\newblock Asymptotic limits for a nonlinear integro-differential equation
  modelling leukocytes’ rolling on arterial walls.
\newblock {\em Nonlinearity}, 35(2):843, 2021.

\bibitem{OeSch}
D.~Oelz and C.~Schmeiser.
\newblock Derivation of a model for symmetric lamellipodia with instantaneous
  cross-link turnover.
\newblock {\em Archive for Rational Mechanics and Analysis}, 198(3):963--980,
  2010.

\bibitem{oelz_schmei_book}
D.~Oelz and C.~Schmeiser.
\newblock How do cells move? mathematical modelling of cytoskeleton dynamics
  and cell migration.
\newblock In A.~Chauviere, L.~Preziosi, and C.~Verdier, editors, {\em Cell
  mechanics: from single scale-based models to multiscale modelling}. Chapman
  and Hall / CRC Press, to appear, 2009.

\bibitem{OeSchVi}
D.~Oelz, C.~Schmeiser, and V.~Small.
\newblock Modelling of the actin-cytoskeleton in symmetric lamellipodial
  fragments.
\newblock {\em Cell Adhesion and Migration}, 2:117--126, 2008.

\bibitem{Zie.Book}
W.~P. Ziemer.
\newblock {\em Weakly differentiable functions}, volume 120 of {\em Graduate
  Texts in Mathematics}.
\newblock Springer-Verlag, New York, 1989.
\newblock Sobolev spaces and functions of bounded variation.

\end{thebibliography}

\end{document}